\documentclass[12pt]{amsart}
\usepackage{comment}
\usepackage{mathrsfs}
\setcounter{tocdepth}{1}
\usepackage[margin=1in]{geometry}
\usepackage{subcaption}
\usepackage{hyperref}
\hypersetup{
    colorlinks=true,
    linkcolor=blue,
    filecolor=blue,      
    urlcolor=blue,
    citecolor=blue,
    pdftitle={Overleaf Example},
  }
  
\usepackage{amsmath}
\usepackage{amsfonts}
\usepackage{amssymb}
\usepackage{graphicx}
\usepackage{tikz-cd}
\usepackage{amsthm}
\usepackage[all]{xy}
\usepackage{adjustbox}
\usepackage{mathtools}

\newcommand{\der}[2]{\dfrac{\mathrm{d}{#1}}{\mathrm{d}{#2}}}
\newcommand{\uu}{\mathbf{u}}
\newcommand{\ww}{\mathbf{w}}
\newcommand{\vv}{\mathbf{v}}
\newcommand{\ttt}{\mathbf{t}}
\newcommand{\Pic}{\mathrm{Pic}}
\newcommand{\Cl}{\mathrm{Cl}}

\newcommand{\PP}{\mathbb{P}}
\newcommand{\ZZ}{\mathbb{Z}}
\newcommand{\RR}{\mathbb{R}}
\newcommand{\OO}{\mathcal{O}}
\newcommand{\XX}{\widetilde{X}}


\newtheorem{counter}{counter}[section]
\newtheorem{theorem}[counter]{Theorem}

\newtheorem{corollary}[counter]{Corollary}
\newtheorem{lemma}[counter]{Lemma}
\newtheorem{proposition}[counter]{Proposition}

\theoremstyle{definition}
\newtheorem{definition}[counter]{Definition}
\newtheorem{notation}[counter]{Notation}
\newtheorem{remark}[counter]{Remark}

\newtheorem{example}[counter]{Example}

\newcommand{\leqp}{%
  \mathrel{\raisebox{-0.5ex}{$\scriptscriptstyle($}}%
  \leq
  \mathrel{\raisebox{-0.5ex}{$\scriptscriptstyle)$}}%
}
\newcommand{\rk}{\mathrm{rk}}
\newcommand{\HH}{\mathrm{H}}
\newcommand{\KK}{\mathrm{K}}

\title{Spherical vector bundles on nodal $K3$ surfaces}

\author{Yeqin Liu}

\begin{document}
\maketitle

\begin{abstract}
We show that when a $K3$ surface acquires a node, the existence of stable spherical sheaves of certain Chern classes can be obstructed.
\end{abstract}

\section{Introduction}

\begin{definition}\label{nodalK3}
 In this paper, a \emph{nodal $K3$ surface} is a projective surface $X$ with a unique ordinary double point, $\omega_{X}\cong \mathcal{O}_{X}$, and $ \mathrm{H}^{1}(X, \mathcal{O}_{X})=0$.  A nodal $K3$ surface is called \emph{general}, if the group of Cartier divisors $\mathrm{Pic}(X)=\mathbb{Z}$. 
\end{definition}
Throughout the paper, let $p$ be the unique singularity of a nodal $K3$ surface $X$. The blow up $\widetilde{X}=\mathrm{bl}_{p}X$ is a smooth $K3$ surface, and the exceptional divisor $L$ is a $(-2)$-curve on $\widetilde{X}$. Let $\pi: \widetilde{X} \rightarrow X$ be the contraction. Let $\mathrm{Pic}(X)$ be the group of Cartier divisors and $\mathrm{Cl}(X)$ be the group of Weil divisors.
\subsection{Main theorem}

In \cite{Sim94}, moduli spaces of stable sheaves on singular varieties are constructed. Stable sheaves on smooth $K3$ surfaces and their moduli spaces have been extensively studied (e.g. \cite{KLS06, O'G99, PR14, Yos01, Yos99}). However, less is known when they specialize to a singular surface. In this paper we study stable spherical sheaves on general nodal $K3$ surfaces (Definition \ref{nodalK3}) using Bridgeland stability. The following main result shows that acquiring singularities can obstruct the existence of stable sheaves of certain Chern classes.

\begin{theorem}\label{main}
  Let $X$ be a nodal $K3$ surface with $\mathrm{Pic}(X)\cong \mathbb{Z}H$, and $ \vv\in \mathrm{H}^{*}_{alg}(X):= \mathrm{H}^{0}(X)\oplus \mathrm{Pic}(X)\oplus  \mathrm{H}^{4}(X)$ with $\mathbf{v}^{2}=-2$ and $\rk(\vv)>0$.
  \begin{itemize}
  	\item (Theorem \ref{nonexistence}) If $\mathrm{Cl}(X)\neq \mathrm{Pic}(X)$ and $\rk(\vv)=2$, then the moduli space of (Gieseker) $H$-semistable sheaves $M_{X, H}(\vv)$ is empty.
  	\item (Corollary \ref{existenceanduniqueness}) Otherwise, $M_{X, H}(\vv)$ is a reduced point, and the unique sheaf in $M_{X, H}(\vv)$ is locally free.
  \end{itemize}
\end{theorem}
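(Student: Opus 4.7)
The plan is to reduce the problem to the smooth $K3$ resolution $\pi\colon \XX \to X$, where one can apply Mukai/Yoshioka's theorem that a spherical Mukai vector on a smooth $K3$ surface admits a unique stable representative (for any generic polarization or Bridgeland stability condition). Concretely, given $\vv = (r, cH, s) \in \HH^*_{alg}(X)$ with $\vv^2 = -2$, lift it naturally to $\widetilde{\vv} = (r, c\pi^*H, s) \in \HH^*_{alg}(\XX)$, which still satisfies $\widetilde{\vv}^2 = -2$ since $(\pi^*H)^2 = H^2$. Let $\widetilde{\mathcal{E}}$ denote the unique stable spherical sheaf on $\XX$ with Mukai vector $\widetilde{\vv}$. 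The main question becomes: when does $\widetilde{\mathcal{E}}$ arise as $\pi^*\mathcal{E}$ for a locally free sheaf $\mathcal{E}$ on $X$, and conversely, can there exist any $H$-semistable sheaf on $X$ with Mukai vector $\vv$?

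The key descent criterion is that a locally free sheaf on $\XX$ descends through $\pi$ to a locally free sheaf on $X$ if and only if its restriction to the exceptional $(-2)$-curve $L$ is trivial. Since $c_1(\widetilde{\mathcal{E}}) \cdot L = c\pi^*H \cdot L = 0$, one has $\widetilde{\mathcal{E}}|_L \cong \bigoplus \OO_L(a_i)$ with $\sum a_i = 0$. I would analyze this splitting type by moving in the Bridgeland stability manifold of $\XX$ along a path that tests $\widetilde{\mathcal{E}}$ against subobjects supported on $L$, in particular $\OO_L(k)$ for small $k$. The heart of the proof is to show that when $\Cl(X) \neq \Pic(X)$ — equivalently, when $\Pic(\XX)$ strictly contains $\langle \pi^*H, L\rangle$, so there is a class $M \in \Pic(\XX)$ with $M \cdot L$ odd — and when $r = 2$, the unique stable $\widetilde{\mathcal{E}}$ is forced by a parity/wall-crossing argument to restrict as $\OO_L(-1) \oplus \OO_L(1)$, obstructing descent. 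For $r \neq 2$, or when $\Cl(X) = \Pic(X)$, no such wall obstructs triviality and $\widetilde{\mathcal{E}}|_L$ is $\OO_L^r$.

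From here, the two parts of the theorem follow. In the non-obstructed case, $\mathcal{E} := \pi_*\widetilde{\mathcal{E}}$ is a locally free sheaf on $X$ with Mukai vector $\vv$; $H$-stability on $X$ and uniqueness transfer from $\XX$, giving the reduced point in $M_{X,H}(\vv)$. In the obstructed rank $2$ case, one must also rule out non-locally-free $H$-semistable sheaves; the plan is to show that any such $\mathcal{F}$ would produce, via its reflexive hull and the lift $\pi^{[*]}\mathcal{F}/\mathrm{tors}$, a rigid torsion-free object on $\XX$ with the same Mukai vector $\widetilde{\vv}$, which by uniqueness on $\XX$ must coincide with $\widetilde{\mathcal{E}}$ — contradicting the restriction computation on $L$.

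The main obstacle I anticipate is the parity/wall-crossing analysis in the second paragraph: pinning down exactly how the extra class $M \in \Pic(\XX)$ with odd intersection with $L$ forces a specific nontrivial splitting of $\widetilde{\mathcal{E}}|_L$ at rank $2$ but not at other ranks. Handling the non-locally-free semistable sheaves in the obstruction case is a secondary technical point that should yield to the same framework once the rank $2$ analysis on $\XX$ is in hand.
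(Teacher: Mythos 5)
Your outline follows the same skeleton as the paper: pass to the resolution $\pi:\XX\to X$, use uniqueness of the stable spherical bundle on $\XX$, characterize descent by triviality of the restriction to $L$ (Proposition \ref{descend}), and detect the obstruction through Bridgeland wall-crossing, with the parity coming from $\Cl(X)\neq\Pic(X)$ entering only at rank $2$. However, the step you yourself flag as "the heart of the proof" is only announced, and it is where essentially all of the paper's work lies: the paper converts triviality of $\widetilde{E}|_L$ into $\mathrm{Hom}_{\XX}(\widetilde{E},\widetilde{E}(L))=\CC$ (Proposition \ref{equivalent}), realizes $\widetilde{E}(L)$ as the stable object of vector $(r,dH+rL,a-r)$ just above the wall $W_{-1}$ spanned by $(r,dH,a)$ and $(0,L,-1)$, and then rules out every other potential wall by an explicit enumeration of spherical classes of the form $\bigl(\tfrac{k_1}{2}r,\tfrac{k_1}{2}dH+\tfrac{e_1}{2}L,m\bigr)$, which are permitted precisely when $\Cl(X)\neq\Pic(X)$; only for $r=2$ (and then $H^2\equiv 2\ (\mathrm{mod}\ 8)$, Remark \ref{mod8}) does such a class survive. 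Your proposal offers no substitute for this case analysis, and your predicted splitting $\OO_L(1)\oplus\OO_L(-1)$ in the obstructed case is not what occurs: the paper exhibits the unique rank-two stable bundle explicitly as a twist by $\tfrac{dH-L}{2}$ of the dual of the kernel of $\OO_{\XX}^{\oplus 2}\twoheadrightarrow\OO_L(1)$ and computes the restriction to be $\OO_L(2)\oplus\OO_L(-2)$.

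The nonexistence half, as you set it up, has a genuine gap. If $\mathcal{F}$ on $X$ is semistable but not locally free at the node, neither its reflexive hull nor $\pi^{[*]}\mathcal{F}/\mathrm{tors}$ need have Mukai vector $\widetilde{\vv}$ (corrections supported on $L$ and in $\HH^{4}$ appear), and rigidity of the lift does not entitle you to invoke uniqueness: the uniqueness statement on $\XX$ is for \emph{stable} objects, so you must first prove stability of the lift for a polarization near $H$ — this is the paper's Lemma \ref{necessary}. Moreover, even granting that the lift equals $\widetilde{\mathcal{E}}$ with nontrivial restriction to $L$, "contradicting the restriction computation" presupposes that the lift is an honest pullback of a locally free sheaf, i.e.\ exactly the local freeness of $\mathcal{F}$ at the node that has not been established; the paper instead closes the argument by adjunction, forming $0\to F\to\pi^{*}(E)\to\OO_L(-2)\to 0$, observing $\pi_{*}F\cong E$ produces a nonzero map $\pi^{*}(E)\to F$, and showing $\mathrm{Hom}(\pi^{*}(E),F)=0$, a contradiction. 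Similarly, uniqueness in the unobstructed case does not simply "transfer" from $\XX$: the paper proves it directly on $X$ from $\chi(E_1,E_2)=2$ together with stability (Corollary \ref{existenceanduniqueness}). As it stands, then, your plan reproduces the paper's framework but leaves both the decisive wall enumeration and the treatment of non-locally-free sheaves unproved, and the latter route, as written, would fail.
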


In Example \ref{contraction} and \ref{P3}, we see both types of nodal $K3$ surfaces in Theorem \ref{main} exist. 

This paper is motivated by the study of exceptional bundles on $\mathbb{P}^{3}$. For any exceptional bundle $E$ on $\mathbb{P}^{3}$ with Chern character $\vv(E)$, \cite{Zub90} shows that $E|_{X}$ is an $H$-stable spherical vector bundles for a smooth quartic surface $i: X \hookrightarrow \mathbb{P}^{3}$, where $H$ is the hyperplane class of $\mathbb{P}^{3}$. In particular, $\vv$ satisfies $i^{*}(\vv(E))^{2}=-2$ under the Mukai pairing on $\mathrm{H}^{*}_{alg}(X)$. However, it is unknown that which spherical Mukai vectors on $X$ can be realized as $i^{*}(\vv(E))$ for some exceptional bundles $E$ on $\mathbb{P}^{3}$. If it can, then by a similar argument in \cite{Zub90}, $E|_{X_{0}}$ is a stable vector bundle for a very general singular quartic surface, which is a general nodal $K3$ surface under Definition \ref{nodalK3}. Hence the presence of stable spherical vector bundles on general singular quartic surfaces is a necessary condition for a spherical Mukai vector to lift. This motivation is discussed in Example \ref{P3}.

\subsection{Outline of the paper}
We sketch the strategy to prove Theorem \ref{main}. Let $X$ be a general nodal $K3$ surface and $\widetilde{X}$ its resolution. For a Mukai vector $\vv\in \HH^{*}_{alg}(X)$ with $\vv^{2}=-2$ and $\rk(\vv)>0$, we will find a stable spherical vector bundle $\widetilde{E}$ on $\widetilde{X}$ with Mukai vector $\vv$, and a condition for it to descend to $X$. In Section \ref{section3}, we show that this condition is equivalent to $\mathrm{Hom}_{\widetilde{X}}(\widetilde{E}, \widetilde{E}(L))=\mathbb{C}$, where $L$ is the exceptional divisor. In Section \ref{section4}, we compute this cohomology group by using Bridgeland stability. The idea for this computation is from a more general algorithm in \cite{Liu22}.

\subsection{Acknowledgments}
I want to thank Izzet Coskun, Benjamin Gould and Benjamin Tighe for many valuable advice and conversations. I want to thank Kota Yoshioka for many useful comments on an earlier version of this paper.

\section{Preliminaries and Notations}

\subsection{Stable spherical vector bundles on $K3$ surfaces}

In this subsection we collect necessary facts about stable spherical vector bundles on $K3$ surfaces. For details about $K3$ surfaces, we refer the readers to \cite{Huy16}. For details about stability of sheaves, we refer the readers to \cite{HL97}. First we note the following theorem, which follows from Theorem \ref{sphericalobject}.

\begin{theorem}\label{uniquesphericalvb}
  Let $Y$ be a  $K3$ surface and $\vv=(r, D, a)\in  \mathrm{H}^{*}_{alg}(Y)$ with $r>0$ and $\vv^{2}=-2$ under the Mukai pairing (such a Mukai vector $\vv$ is called \emph{spherical}). Then for a generic polarization $H$ (in the sense of \cite{O'G97}), the moduli space of $H$-Gieseker semistable sheaves $M_{Y, H}(\vv)$ 
  is a reduced point. Furthermore, the unique sheaf $E\in M_{Y, H}(\vv)$ is locally free. 
\end{theorem}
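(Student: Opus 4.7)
My plan is to split the statement into four parts --- reducedness, uniqueness, local freeness, and existence --- three of which are routine cohomological computations on a K3, leaving existence as the substantive input.

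\textbf{Reducedness and uniqueness.} For any $H$-stable $E \in M_{Y,H}(\vv)$, stability gives $\mathrm{Hom}(E,E) = \mathbb{C}$, and Serre duality on $Y$ gives $\mathrm{Ext}^2(E,E) \cong \mathbb{C}$. The Mukai/Riemann--Roch identity $\chi(E,E) = -\vv^2 = 2$ then forces $\mathrm{Ext}^1(E,E) = 0$, so $M_{Y,H}(\vv)$ is zero-dimensional and reduced at $[E]$. For uniqueness, note that $\vv^2 = -2$ forces $\vv$ to be primitive, so for $H$ generic in O'Grady's sense, $H$-semistability coincides with $H$-stability. If $E_1 \not\cong E_2$ were two such stable sheaves, then $\mathrm{Hom}(E_i, E_j) = 0$ for $i \neq j$ by stability plus equality of reduced Hilbert polynomials, and $\mathrm{Ext}^2(E_i, E_j) = 0$ by Serre duality; but $\chi(E_1, E_2) = 2$ would force $\dim \mathrm{Ext}^1(E_1, E_2) = -2$, a contradiction.

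\textbf{Local freeness.} Suppose $E \in M_{Y,H}(\vv)$ is not locally free. Since $r > 0$, $E$ is torsion-free, so we have $0 \to E \to E^{**} \to Q \to 0$ with $Q$ zero-dimensional of length $\ell > 0$. Computing Mukai vectors, $\vv(E^{**}) = \vv + (0,0,\ell)$ gives $\vv(E^{**})^2 = -2 - 2r\ell < -2$. However, $E^{**}$ is reflexive on a surface, hence locally free, and inherits $\mu_H$-stability from $E$: any destabilizing subsheaf $F \subset E^{**}$ would intersect $E$ in a subsheaf of the same rank and slope as $F$, destabilizing $E$. Being stable, $E^{**}$ is simple, so $\vv(E^{**})^2 \geq -2$ follows from $2 = \dim\mathrm{Hom}(E^{**}, E^{**}) + \dim\mathrm{Ext}^2(E^{**}, E^{**}) \geq \chi(E^{**}, E^{**}) = -\vv(E^{**})^2$, contradicting the computation above. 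Hence $\ell = 0$ and $E$ is locally free.

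\textbf{Existence --- the main obstacle.} The substantive task is to exhibit at least one $H$-stable sheaf with Mukai vector $\vv$. I would obtain this from the construction of a spherical object in $D^b(Y)$ with Mukai vector $\vv$ --- the content of the referenced Theorem \ref{sphericalobject} --- either via Mukai's classical deformation argument for primitive vectors on generically polarized K3s, or via Bridgeland stability by producing a spherical object in the heart of a tilted $t$-structure and transporting it across walls to the Gieseker chamber of $H$. The delicate point, where the genericity of $H$ is essential, is to verify that for this $H$ the spherical object is actually a Gieseker-stable locally free sheaf rather than a complex with cohomology in multiple degrees --- i.e., that no walls separate the Bridgeland stability locus of the spherical object from the Gieseker chamber.
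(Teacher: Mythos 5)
Your decomposition, and your arguments for reducedness and uniqueness, are fine and in substance follow the same route as the paper: the paper gives no independent proof, asserting only that the statement follows from Theorem \ref{sphericalobject}, implicitly combined with Theorem \ref{largevolumelimit} to identify Bridgeland-stable objects in the large-volume chamber of a generic $H$ with $H$-Gieseker stable sheaves. So deferring existence to that input (or to the classical Mukai--Kuleshov--Yoshioka construction) is acceptable and matches the paper. One framing remark: no wall-crossing or ``no walls separate'' verification is needed for existence --- one simply applies Theorem \ref{sphericalobject} to a generic stability condition chosen inside the Gieseker chamber of the generic polarization $H$, and Theorem \ref{largevolumelimit} identifies the resulting stable object with an $H$-Gieseker stable sheaf.

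The genuine gap is in your local-freeness paragraph. You need $E^{**}$ to be simple, and you deduce this from the claim that $E^{**}$ ``inherits $\mu_H$-stability from $E$.'' But $E$ is only Gieseker stable, which yields slope \emph{semi}stability, not slope stability, and no coprimality assumption is available here since $\vv=(r,D,a)$ is an arbitrary spherical class. Your intersection argument --- a subsheaf $F\subset E^{**}$ meets $E$ in a subsheaf of the same rank and $c_1$ --- only excludes $F$ with $\mu_H(F)>\mu_H(E)$, hence only proves that $E^{**}$ is $\mu_H$-semistable; a subsheaf with $\mu_H(F)=\mu_H(E)$ and smaller reduced Hilbert polynomial does not contradict Gieseker stability of $E$. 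A strictly $\mu_H$-semistable sheaf need not be simple, so the inequality $2\geq\chi(E^{**},E^{**})=2+2r\ell$ cannot be invoked as written. To close this step you should either quote the classical fact, going back to Mukai, that a simple (in particular stable) rigid sheaf on a K3 surface is locally free --- whose proof requires a genuinely different argument (elementary modifications, or a more careful Ext-computation) than slope considerations on $E^{**}$ --- or restrict to situations where $E$ is actually $\mu_H$-stable, which is not guaranteed by the hypotheses of the theorem.
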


\subsection{Bridgeland stability condition}

The main tool in this paper is Bridgeland stability, which we briefly recall here. Some good references are \cite{Bri07, Bri08}. In this subsection, let $\mathcal{D}$ be the bounded derived category of a $K3$ surface $Y$.

\begin{definition}
  A \emph{(numerical) stability condition} on $\mathcal{D}$ is a pair $\sigma=(Z, \mathcal{A})$, where $\mathcal{A}$ is the heart of a bounded t-structure on $\mathcal{D}$, and
  $$Z: \KK_{\mathrm{num}}(Y):=\KK(Y)/\KK(Y)^{\perp} \longrightarrow \mathbb{C}$$
  is a homomorphism of abelian groups, called the \emph{central charge}, such that
  \begin{itemize}
  \item For any $0\neq E\in \mathcal{A}$, we have $Z(E)=\rho(E)\cdot \mathrm{exp}(i\pi \phi(E))$ for some $\rho(E)\in \mathbb{R}_{+}$ and the \emph{phase} $\phi(E)\in (0,1]$. 
  \item (Harder-Narasimhan filtration) An object $E\in \mathcal{A}$ is called $\sigma$-\emph{(semi)stable}, if for any subobject $0\neq F \subset E$ in $\mathcal{A}$, we have $\phi(F) \leqp \phi(E)$. For any object $E\in \mathcal{A}$, there exists a filtration of objects in $\mathcal{A}$:
    $$0=E_{0}\subset E_{1}\subset \cdots \subset E_{n}=E,$$
    such that $\mathrm{gr}_{i}(E)=E_{i}/E_{i-1}$ are $\sigma$-semistable and $\phi(\mathrm{gr}_{i})> \phi(\mathrm{gr}_{i+1})$ for all $i$. 
    \item (Support condition) For a fixed norm $|| \cdot ||$ on $\KK_{\mathrm{num}}(Y)\otimes \mathbb{R}$, there exists a constant $C>0$, such that 
    $$ Z(\vv(E)) \geq C ||\vv(E)|| $$
    for all $\sigma$-semistable objects $E\in \mathcal{A}$. 
  \end{itemize}
\end{definition}

\begin{theorem}[\cite{Bri07}]
The set of stability conditions on $Y$, denoted by $\mathrm{Stab}(Y)$, has a complex manifold structure.
\end{theorem}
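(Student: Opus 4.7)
The plan is to construct the complex manifold structure on $\mathrm{Stab}(Y)$ by exhibiting the forgetful map to central charges as a local homeomorphism onto an open subset of a finite-dimensional complex vector space. Since $\KK_{\mathrm{num}}(Y)$ is a finitely generated abelian group (its rank is bounded by that of $\HH^{*}_{\mathrm{alg}}(Y)$), the target
$$V := \mathrm{Hom}_{\ZZ}(\KK_{\mathrm{num}}(Y), \CC)$$
is a finite-dimensional complex vector space, and the map $\pi: \mathrm{Stab}(Y) \to V$, $(Z, \mathcal{A}) \mapsto Z$, will pull the standard complex structure of $V$ back to $\mathrm{Stab}(Y)$.

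First I would equip $\mathrm{Stab}(Y)$ with a natural topology by simultaneously comparing central charges and phases of semistable objects. For any $\sigma \in \mathrm{Stab}(Y)$ and nonzero $E \in \mathcal{D}$, the HN filtration determines extremal phases $\phi_{\sigma}^{+}(E) \geq \phi_{\sigma}^{-}(E)$, and one defines a generalized metric
$$d(\sigma_{1}, \sigma_{2}) := \sup_{0 \neq E} \bigl\{ |\phi_{\sigma_1}^{+}(E) - \phi_{\sigma_2}^{+}(E)|,\; |\phi_{\sigma_1}^{-}(E) - \phi_{\sigma_2}^{-}(E)|,\; \| Z_{1} - Z_{2} \| \bigr\}.$$
With the resulting Hausdorff topology, $\pi$ is continuous, and local injectivity follows because a stability condition is determined by $Z$ together with its slicing, so that small perturbations of $Z$ force only small perturbations of the phases of semistable objects.

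The heart of the argument, and the main obstacle, is proving local surjectivity (Bridgeland's \emph{deformation lemma}): for every $\sigma_{0} = (Z_{0}, \mathcal{A}_{0}) \in \mathrm{Stab}(Y)$ there is an $\varepsilon > 0$ such that any $Z \in V$ with $\| Z - Z_{0} \| < \varepsilon$ lifts to a unique stability condition $\sigma = (Z, \mathcal{A})$ near $\sigma_{0}$. The construction proceeds by fixing a narrow half-open interval of phases $(\phi_{0} - \eta, \phi_{0} + \eta]$ and forming the torsion pair on $\mathcal{A}_{0}$ whose torsion part is generated by $\sigma_{0}$-semistable objects of phase greater than $\phi_{0}$; tilting at this torsion pair produces a candidate heart $\mathcal{A}$, and one must then verify that the pair $(Z, \mathcal{A})$ satisfies positivity, the HN axiom, and the support condition. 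The support condition on $\sigma_{0}$ is indispensable: the uniform lower bound on masses of semistable objects yields a Noetherianity-type property that both allows HN filtrations for $\sigma$ to be built inductively and ensures that $\mathcal{A}$ is again the heart of a bounded t-structure. Once this local bijection between a neighborhood of $\sigma_{0}$ in $\mathrm{Stab}(Y)$ and an open subset of $V$ is established, the complex manifold structure on $\mathrm{Stab}(Y)$ is automatic.
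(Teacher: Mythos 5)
Your sketch is essentially the argument of the cited source: the paper itself gives no proof of this statement but quotes it from \cite{Bri07}, and your outline (topologize $\mathrm{Stab}(Y)$ via the generalized metric on phases, show the forgetful map to $\mathrm{Hom}_{\mathbb{Z}}(\KK_{\mathrm{num}}(Y),\mathbb{C})$ is a local homeomorphism via the deformation lemma, with the support/local-finiteness condition supplying the uniform bounds needed to rebuild HN filtrations for the deformed charge, then pull back the complex structure) is exactly Bridgeland's proof. The only caveat is that the deformation lemma itself is the substantial content and is only outlined here, but as a reconstruction of the intended argument it is correct and matches the reference.
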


In this paper we shall only use specific stability conditions constructed as follows. For any $\beta\in \mathrm{Pic}(Y)_{\mathbb{R}}$ and $\omega\in \mathrm{Amp}(Y)_{\mathbb{R}}$, define the following subcategories of $\mathcal{D}$:
\begin{itemize}
\item $\mathcal{F}_{(\beta, \omega)}=\{E\in \mathrm{Coh}(Y): \mbox{for all subsheaves }0\neq F\subset E, \dfrac{c_{1}(F)\cdot \omega}{r(F)} \leq \beta\cdot \omega \}$,
\item $\mathcal{T}_{(\beta, \omega)}=\{E\in \mathrm{Coh}(Y): E \mbox{ is torsion, or for all quotient }E\twoheadrightarrow F, \dfrac{c_{1}(F)\cdot \omega}{r(F)}>\beta\cdot \omega \}$.
\end{itemize}
Then $\mathcal{F}_{(\beta, \omega)}, \mathcal{T}_{(\beta, \omega)}$ is a torsion pair. Let
$$\mathcal{A}_{(\beta, \omega)}=\{E\in \mathcal{D}: \mathcal{H}^{-1}(E)\in \mathcal{F}_{(\beta, \omega)}, \mathcal{H}^{0}(E)\in \mathcal{T}_{(\beta, \omega)}, \mathcal{H}^{i}(E)=0 \mbox{ for }i\neq 0, -1\}$$
be the tilt of $\mathrm{Coh}(Y)$ with respect to this torsion pair. Define $Z_{(\beta, \omega)}: \KK_{\mathrm{num}}(Y) \longrightarrow \mathbb{C}$ as follows:
\begin{equation}\label{Z}
	Z_{(\beta, \omega)}(r, D, a)=-a-r \frac{\beta^{2}-\omega^{2}}{2}+ D\cdot \beta + i\omega\cdot (D-r\beta). 
\end{equation}
Then $\sigma_{(\beta, \omega)}=(Z_{(\beta, \omega)}, \mathcal{A}_{(\beta, \omega}))$ is a stability condition if $Z_{(\beta, \omega)}(\vv)\notin \mathbb{R}_{\leq 0}$ for all $\vv\in \KK_{\mathrm{num}}(Y)$ with $r(\vv)>0$ and $\vv^{2}=-2$ \cite{Bri08}. For every ample class $\omega\in \mathrm{Amp}(Y)_{\mathbb{R}}$, let
\begin{equation}\label{He}
	\mathbb{H}_{\omega}=\{\sigma_{(s,t)}=(Z_{(s\omega,t \omega)}, \mathcal{A}_{(s\omega,t\omega)} )\in \mathrm{Stab}(Y): s\in \mathbb{R}, t\in \mathbb{R}_{+}\}.
	\end{equation}	
Then $\mathbb{H}_{\omega}$ is an upper half plane with countably many line segments removed.

For a fixed Mukai vector $\vv$, there is a locally finite collection of real codimension 1 submanifolds of $\mathrm{Stab}(Y)$ called \emph{walls}. The connected components of the complement of walls are called \emph{chambers}. For stability conditions within the same chamber, the stable objects whose Mukai vectors are $\vv$ stay the same \cite{Bri08}. The intersections of walls for $\vv$ with $\mathbb{H}_{\omega}$ are nested semicircles \cite{Mac14}.

The following fact about stable spherical objects will be used later.

\begin{theorem}[\cite{BM14a, BM14b}]\label{sphericalobject}
  Let $\vv\in \mathrm{H}^{*}_{alg}(Y)$ satisfy $\rk(\vv)>0$ and $\vv^{2}=-2$. Then for a generic stability condition $\sigma$, $M_{\sigma}(\vv)$, the moduli space of $\sigma$-semistable objects in $\mathcal{A}$ with Mukai vector $\vv$, is a reduced point. 
\end{theorem}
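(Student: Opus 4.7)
The plan is to combine Mukai's Hirzebruch--Riemann--Roch formula, Serre duality on a $K3$ surface (where the Serre functor is the shift $[2]$), and a wall-and-chamber argument using the genericity of $\sigma$. For any two objects $E,F\in \mathcal{D}$ one has
\[
\chi(E,F)=\sum_{i}(-1)^{i}\dim\mathrm{Ext}^{i}(E,F)=-\langle \vv(E),\vv(F)\rangle,
\]
so in particular $\chi(E,E)=-\vv^{2}=2$ for any $E$ with Mukai vector $\vv$. All the key steps are visible from this identity.

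First I would show that each point $[E]\in M_{\sigma}(\vv)$ is reduced and isolated. By hypothesis $E$ is $\sigma$-stable, so $\mathrm{Hom}(E,E)=\mathbb{C}$, and Serre duality gives $\mathrm{Ext}^{2}(E,E)\cong \mathrm{Hom}(E,E)^{\vee}=\mathbb{C}$. Combined with $\chi(E,E)=2$ this forces $\mathrm{Ext}^{1}(E,E)=0$, which is the Zariski tangent space at $[E]$. Next I would prove uniqueness: if $E,F$ are both $\sigma$-stable with $\vv(E)=\vv(F)=\vv$ and $E\not\cong F$, then they share the same phase, so stability yields $\mathrm{Hom}(E,F)=\mathrm{Hom}(F,E)=0$. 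Serre duality then kills $\mathrm{Ext}^{2}(E,F)$ as well, forcing $\chi(E,F)\leq 0$, whereas Riemann--Roch gives $\chi(E,F)=-\vv^{2}=2$, a contradiction.

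Finally I would deal with the role of genericity and with nonemptiness. The genericity of $\sigma$ with respect to $\vv$ means $\sigma$ lies off the locally finite collection of walls for $\vv$; on an open chamber no strictly $\sigma$-semistable object of class $\vv$ can exist, because strict semistability would force a nontrivial numerical decomposition $\vv=\vv_{1}+\vv_{2}$ with $Z(\vv_{1})$ and $Z(\vv_{2})$ positively proportional, which cuts out a real-codimension-one locus. Hence on the generic chamber $\sigma$-semistability and $\sigma$-stability coincide for $\vv$, and the previous two steps apply.

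Nonemptiness is the main obstacle and is where I would really lean on external input. My approach would be to start in the large-volume chamber of $\mathbb{H}_{\omega}$, where $\sigma_{(s,t)}$-stability for objects with $\rk>0$ agrees with twisted Gieseker stability and the existence (and uniqueness) of a stable spherical sheaf is classical via Mukai's argument. One then propagates the unique stable representative across the walls separating this chamber from the chamber of $\sigma$ using spherical twists and the wall-crossing formalism; the nontrivial content is that the spherical object survives every wall, landing in the generic chamber containing $\sigma$ as the unique $\sigma$-stable object of class $\vv$. This last transport is precisely the deep part supplied by the cited work, and would be the step I expect to spend the most effort justifying.
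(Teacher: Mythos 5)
This statement is quoted in the paper from \cite{BM14a, BM14b} without an internal proof, so there is nothing in the paper itself to compare against; your sketch follows essentially the same route as those references. Your rigidity and uniqueness steps (stability plus Serre duality and $\chi(E,F)=-\langle \vv,\vv\rangle=2$) and the wall-and-chamber genericity argument are correct and complete, and you correctly isolate nonemptiness---existence in the large volume limit via the classical theory of stable spherical sheaves, then transport across walls by spherical twists/wall-crossing---as the one genuinely deep ingredient, which is precisely what the cited works supply.
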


We also note the following relation between Gieseker stability and Bridgeland stability.

\begin{theorem}[\cite{Bri08}, large volume limit]\label{largevolumelimit}
  Let $\omega\in \mathrm{Amp}(Y)_{\mathbb{R}}$, and let $E\in \mathrm{Coh}(Y)$ such that $\rk(E)>0$ and $c_{1}(E)\cdot \omega>0$. Then $E$ is $\omega$-Gieseker semistable if and only if $E$ is $\sigma_{(0, t\omega)}$-semistable for $t\gg 0$.
\end{theorem}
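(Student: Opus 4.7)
The plan is to compare the Bridgeland phase ordering with the Gieseker ordering by expanding the slope function $\nu_t := -\Re Z/\Im Z$ asymptotically in $t\to\infty$. First I would observe that since the torsion pair defining the tilt depends only on $\beta = 0$ and the ray $\mathbb{R}_+\omega$, the heart $\mathcal{A} := \mathcal{A}_{(0,t\omega)}$ is independent of $t > 0$. Because $E$ is of positive rank with $\mu_\omega(E) > 0$, and Gieseker semistability implies $\mu_\omega$-semistability, every torsion-free quotient of $E$ has strictly positive $\omega$-slope; hence $E \in \mathcal{T}_{(0,\omega)} \subset \mathcal{A}$, so it is at least an object of the relevant heart.

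Next I would do the asymptotic expansion. From (\ref{Z}) with $\beta = 0$ one has
$$Z_{(0,t\omega)}(r,D,a) = -a + \frac{r t^2 \omega^2}{2} + i\, t\, \omega\cdot D,$$
so for any $F \in \mathcal{A}$ with $r(F) > 0$ and $\omega\cdot c_1(F) > 0$,
$$\nu_t(F) = \frac{a(F)}{t\,\omega \cdot c_1(F)} - \frac{t\,\omega^2}{2\,\mu_\omega(F)},$$
with higher $\nu_t$ corresponding to higher phase. The leading $t$-term compares $1/\mu_\omega$, so the phase ordering asymptotically refines the $\mu_\omega$-ordering; the $O(1/t)$ correction then compares $a/(\omega\cdot c_1)$, which together with slope agreement is precisely the data underlying Gieseker's reduced Hilbert polynomial comparison. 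For the converse direction, a saturated Gieseker-destabilizing subsheaf of $E$ sits inside $\mathcal{T}_{(0,\omega)} \subset \mathcal{A}$, and by the expansion above has strictly higher phase for $t \gg 0$, so Bridgeland semistability at some large $t$ implies Gieseker semistability.

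The main obstacle is controlling subobjects $F \hookrightarrow E$ in $\mathcal{A}$ which are not themselves subsheaves. Taking cohomology of $0\to F\to E\to E/F\to 0$ gives
$$0 \to \mathcal{H}^{-1}(E/F) \to \mathcal{H}^0(F) \to E \to \mathcal{H}^0(E/F) \to 0,$$
with $\mathcal{H}^{-1}(E/F) \in \mathcal{F}_{(0,\omega)}$ a torsion-free sheaf of non-positive $\omega$-slope. To make the asymptotic argument actually work one must bound the Mukai invariants of such $F$ uniformly: the constraints $\omega\cdot c_1(\mathcal{H}^0(F)) \ge 0$, $\omega\cdot c_1(\mathcal{H}^{-1}(E/F)) \le 0$, and $\vv(\mathcal{H}^0(F)) - \vv(\mathcal{H}^{-1}(E/F)) \le \vv(E)$ cut out a bounded region in the finite-rank lattice $\HH^*_{alg}(Y)$, leaving only finitely many classes to check. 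For each such class the expansion of $\nu_t$ shows $\phi(F) < \phi(E)$ once $t$ exceeds a class-dependent threshold, and taking the maximum of these finitely many thresholds yields a uniform $t_0$ beyond which no destabilizer in $\mathcal{A}$ survives. The hard part of the argument is precisely this boundedness: identifying the correct bounds on $r(\mathcal{H}^0(F))$ and on the discrete invariants of $\mathcal{H}^{-1}(E/F)$ using only the sheaf-theoretic constraints and the positivity of $\omega$.
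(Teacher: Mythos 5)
First, note that the paper does not prove this statement at all: it is quoted from \cite{Bri08} (the large volume limit, Prop.~14.2 there), so there is no internal proof to compare against; your proposal has to be judged against the standard argument.

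Your outline follows that standard route correctly up to the decisive point: the heart $\mathcal{A}_{(0,t\omega)}$ is indeed independent of $t$, the expansion of $\nu_{t}$ is computed correctly, and it correctly identifies that the leading term reproduces the $\mu_{\omega}$-comparison while the $O(1/t)$ term reproduces the $\chi/r$ refinement, and that the only serious issue is controlling subobjects $F\subset E$ in the tilted heart (your four-term sequence for $\mathcal{H}^{-1}(E/F)$ and $\mathcal{H}^{0}(F)$ is the right description; note $\mathcal{H}^{-1}(F)=0$ automatically). The genuine gap is the boundedness claim you rest the whole forward direction on. The constraints you list --- $\omega\cdot c_{1}(\mathcal{H}^{0}(F))\geq 0$, $\omega\cdot c_{1}(\mathcal{H}^{-1}(E/F))\leq 0$, and a componentwise inequality of Mukai vectors --- do \emph{not} cut out a bounded region of $\mathrm{H}^{*}_{alg}(Y)$: the ranks of $\mathcal{H}^{0}(F)$ and $\mathcal{H}^{-1}(E/F)$ can grow without bound (only their difference is controlled), the components of $c_{1}$ orthogonal to $\omega$ are unconstrained, and $\mathrm{ch}_{2}$ of each piece is unconstrained; moreover ``$\leq \vv(E)$'' is not a meaningful partial order giving finiteness. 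So ``finitely many classes, take the maximum threshold'' does not get off the ground. The standard repair is to reduce to destabilizers that are $\sigma_{t}$-semistable (first HN factors), then bound their invariants by the Bogomolov-type inequality on a $K3$ ($\vv(F)^{2}\geq -2$ for semistable objects) together with the Hodge index theorem, which bounds $a(F)$ from above in terms of $r(F)$ and $\omega\cdot c_{1}(F)$ (the latter pinched between $0$ and $\omega\cdot c_{1}(E)$); one then shows the resulting inequalities force $\nu_{t}(F)\leq\nu_{t}(E)$ uniformly for $t$ large, or equivalently one invokes local finiteness and nestedness of the walls for $\vv(E)$ in the $(s,t)$-half-plane to conclude there is a topmost wall. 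In other words, infinitely many numerical classes must be excluded by inequalities, not enumerated. Two smaller points: in the converse direction you should take the maximal Gieseker-destabilizing subsheaf (which is Gieseker-semistable, hence lies in $\mathcal{T}_{(0,\omega)}$); a merely saturated destabilizer need not lie in $\mathcal{T}_{(0,\omega)}$. And the statement requires semistability for all $t\gg 0$, so the destabilization must be shown to persist for all large $t$, which your expansion does give once the boundedness issue is fixed.
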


\section{Descending from the resolution}\label{section3}
Let $X$ be a general nodal $K3$ surface and $\pi: \widetilde{X} \longrightarrow X$ be its resolution. In this section, we study the condition for a vector bundle on $\widetilde{X}$ to descend to a vector bundle on $X$.

\begin{proposition}\label{descend}
  Let $\widetilde{E}$ be a vector bundle over $\widetilde{X}$. Then $\widetilde{E}=\pi^{*}(E)$ for some locally free sheaf $E$ over $X$ if and only if $\widetilde{E}|_{L}\cong \mathcal{O}^{\oplus r}$, where $r$ is the rank of $\widetilde{E}$.
\end{proposition}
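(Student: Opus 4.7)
The plan is to adopt $E := \pi_{*}\widetilde{E}$ as the natural candidate on $X$, verify that it is locally free of rank $r$, and show that the adjunction morphism $\pi^{*}E \to \widetilde{E}$ is an isomorphism. The forward direction ($\Rightarrow$) is immediate: if $\widetilde{E} = \pi^{*}E$ with $E$ locally free of rank $r$, then $\widetilde{E}|_{L} = (\pi|_{L})^{*} E$, and since $\pi|_{L}$ factors through the closed point $p$, this pullback is the constant sheaf $E_{p}\otimes_{\mathbb{C}} \mathcal{O}_{L} \cong \mathcal{O}_{L}^{\oplus r}$.

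For the converse, the key technical step is to promote the trivialization of $\widetilde{E}|_{L}$ inductively to a trivialization of $\widetilde{E}|_{nL}$ for every $n \geq 1$. Since $L$ is a $(-2)$-curve, the conormal sheaf $\mathcal{I}_{L}/\mathcal{I}_{L}^{2} \cong \mathcal{O}_{L}(-L)$ has degree $-L^{2}=2$, so $\mathcal{I}_{L}^{n}/\mathcal{I}_{L}^{n+1} \cong \mathcal{O}_{\mathbb{P}^{1}}(2n)$. Tensoring with $\widetilde{E}$ yields the short exact sequence
$$0 \to \widetilde{E}|_{L} \otimes \mathcal{O}_{L}(2n) \to \widetilde{E}|_{(n+1)L} \to \widetilde{E}|_{nL} \to 0,$$
and the obstruction to lifting $r$ trivializing sections from $nL$ to $(n+1)L$ lies in $H^{1}(L,\widetilde{E}|_{L} \otimes \mathcal{O}_{L}(2n)) \cong H^{1}(\mathbb{P}^{1}, \mathcal{O}(2n))^{\oplus r}$, which vanishes for every $n \geq 0$. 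Nakayama then promotes the lifted sections to a genuine trivialization, so by induction $\widetilde{E}|_{nL} \cong \mathcal{O}_{nL}^{\oplus r}$ for all $n$.

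From here I would invoke the Theorem on Formal Functions. It gives
$$(\pi_{*}\widetilde{E})^{\wedge}_{p} \;\cong\; \varprojlim_{n} H^{0}(nL, \widetilde{E}|_{nL}) \;\cong\; \Bigl(\varprojlim_{n} H^{0}(nL, \mathcal{O}_{nL})\Bigr)^{\oplus r} \;=\; \widehat{\mathcal{O}}_{X,p}^{\oplus r},$$
where the last identification uses $R^{0}\pi_{*}\mathcal{O}_{\widetilde{X}} = \mathcal{O}_{X}$, valid because an ordinary double point is a rational singularity. By faithful flatness of completion over the Noetherian local ring $\mathcal{O}_{X,p}$, this forces $E := \pi_{*}\widetilde{E}$ to be locally free of rank $r$ at $p$, and it is automatically locally free elsewhere since $\pi$ is an isomorphism off $L$. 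A parallel inverse-limit computation, again using the vanishing $H^{1}(\mathbb{P}^{1},\mathcal{O}(2n))=0$, yields $(R^{1}\pi_{*}\widetilde{E})^{\wedge}_{p}=0$. Finally, the adjunction map $\pi^{*}E \to \widetilde{E}$ is an isomorphism off $L$ for trivial reasons, and on each infinitesimal neighborhood it is a map of locally free sheaves of equal rank that is an isomorphism modulo $\mathcal{I}_{L}$ by construction; Nakayama promotes it to an isomorphism on the formal neighborhood, hence globally.

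The main obstacle is the inductive triviality in the middle paragraph: the entire argument rests on the positivity $\deg \mathcal{O}_{L}(-L)=2$, which is specific to the $(-2)$-curve geometry of a nodal resolution. Absent this positivity the obstruction $H^{1}(L, \widetilde{E}|_{L} \otimes \mathcal{O}_{L}(-nL))$ would be non-trivial and even infinitesimal extension could fail. A subordinate concern is Mittag-Leffler for the inverse systems, but the surjectivity of the transition maps is automatic from the same $H^{1}$-vanishing, so this causes no trouble.
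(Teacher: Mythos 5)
Your argument is correct, but it takes a genuinely different route from the paper. The paper never completes along $L$: its Lemma 3.2 builds, for each $m$, a quotient $Q_m$ of $\widetilde{E}$ supported on $L$ with $H^0(Q_m)\twoheadrightarrow H^0(\widetilde{E}|_L)$ (using the same positivity you exploit, in the guise of the quotients $\mathcal{O}_L(2(m-1))^{\oplus r}$ having no $H^1$), then twists by a large multiple of the pullback polarization (trivial on $L$) so that Serre vanishing kills $H^1(\widetilde{E}(mA))$; this produces honest global sections of a twist of $\widetilde{E}$ restricting to a basis of $\widetilde{E}|_L$, whose dependency locus misses $L$, so the twist is trivial on a neighborhood of $L$ and $\pi_*\widetilde{E}$ can be computed directly by the projection formula before invoking adjunction. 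Your version replaces the global twisting-and-sections step by trivializing $\widetilde{E}|_{nL}$ on all infinitesimal neighborhoods and then using the theorem on formal functions plus faithful flatness of completion to get local freeness of $\pi_*\widetilde{E}$ at $p$; this is more local (it only needs properness of $\pi$ and the degree of the conormal bundle, not an auxiliary ample class), at the cost of the standard cofinality point ($\mathcal{I}_L^n$ versus $\mathfrak{m}_p^n\mathcal{O}_{\widetilde{X}}$) in formal functions. Two small touch-ups: the phrase ``isomorphism modulo $\mathcal{I}_L$ by construction'' for the adjunction map deserves one more line --- the surjective transition maps in your inverse system give surjectivity of $\widehat{E}_p\to H^0(L,\widetilde{E}|_L)$, and since the target is killed by $\mathfrak{m}_p$ this descends to $E_p$, which is exactly what feeds Nakayama along $L$; and $\pi_*\mathcal{O}_{\widetilde{X}}=\mathcal{O}_X$ follows from normality of the node (Zariski's main theorem) rather than from rationality, which governs $R^1\pi_*$. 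Neither point is a gap; both approaches rest on the same geometric input, namely triviality of $\widetilde{E}|_L$ together with $\deg_L\mathcal{O}_L(-L)=2>0$.
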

We use the following lemma to prove Proposition \ref{descend}.

\begin{lemma}\label{surjection}
Assume $\widetilde{E}|_{L}=\mathcal{O}_{L}^{\oplus r}$. Then for any $m\in \mathbb{Z}_{>0}$, $\widetilde{E}$ fits into the following exact sequence
  $$0 \longrightarrow \widetilde{E}(-mL) \overset{i}{\longrightarrow} \widetilde{E} \overset{f_{m}}{\longrightarrow} Q_{m}  \longrightarrow 0, $$
  where $i$ is the composition of the natural inclusions
  $$\widetilde{E}(-mL)\subset \widetilde{E}(-(m-1)L)\subset \cdots \subset \widetilde{E}(-L)\subset \widetilde{E}.$$
  The sheaf $Q_{m}$ admits a map $g_{m}: Q_{m} \rightarrow \widetilde{E}|_{L}$, such that $g_{m} \circ f_{m}: \widetilde{E} \rightarrow \widetilde{E}|_{L} $ is the restriction map $f_{1}$, and the induced map on cohomology $\mathrm{H}^{0}(\widetilde{X}, Q_{m}) \rightarrow \mathrm{H}^{0}(\widetilde{X}, \widetilde{E}|_{L})$ is surjective. 
  
\end{lemma}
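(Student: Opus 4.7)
The plan is to take $Q_m := \widetilde{E}/\widetilde{E}(-mL)$, which realizes the claimed short exact sequence by definition, with $i$ equal to the composed natural inclusion. Tensoring the standard short exact sequence $0 \to \mathcal{O}_{\widetilde{X}}(-mL) \to \mathcal{O}_{\widetilde{X}} \to \mathcal{O}_{mL} \to 0$ with the locally free sheaf $\widetilde{E}$ identifies $Q_m$ with $\widetilde{E}\otimes \mathcal{O}_{mL}$, the restriction of $\widetilde{E}$ to the $m$-th order infinitesimal thickening of $L$. The natural quotient $\mathcal{O}_{mL}\twoheadrightarrow \mathcal{O}_L$, tensored with $\widetilde{E}$, produces the desired map $g_m: Q_m \to \widetilde{E}|_L$. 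The identity $g_m \circ f_m = f_1$ then holds on the nose, since both compositions are the restriction from $\widetilde{X}$ to $L$.

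For the surjectivity on global sections, the plan is to reduce to a cohomological vanishing. Let $K_m := \ker(g_m) = \widetilde{E}(-L)/\widetilde{E}(-mL)$. The long exact sequence associated to
\begin{equation*}
0 \to K_m \to Q_m \xrightarrow{g_m} \widetilde{E}|_L \to 0
\end{equation*}
shows that it suffices to prove $\mathrm{H}^1(\widetilde{X}, K_m) = 0$. To this end, I would filter $K_m$ by the subsheaves $\widetilde{E}(-jL)/\widetilde{E}(-mL)$ for $j = 1, \ldots, m$, whose successive quotients are $\widetilde{E}(-jL)/\widetilde{E}(-(j+1)L) \cong \widetilde{E}|_L \otimes \mathcal{O}_L(-jL)$ for $j = 1, \ldots, m-1$; each of these comes from tensoring $0 \to \mathcal{O}(-(j+1)L) \to \mathcal{O}(-jL) \to \mathcal{O}_L(-jL) \to 0$ with $\widetilde{E}$.

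The substantive step is the vanishing of $\mathrm{H}^1$ of these graded pieces. Since $L$ is a $(-2)$-curve on a $K3$ surface, $L \cong \mathbb{P}^1$ and $\mathcal{O}_L(L)$ has degree $-2$, so $\mathcal{O}_L(-jL) \cong \mathcal{O}_{\mathbb{P}^1}(2j)$. Combined with the hypothesis $\widetilde{E}|_L \cong \mathcal{O}_L^{\oplus r}$, each graded piece becomes $\mathcal{O}_{\mathbb{P}^1}(2j)^{\oplus r}$ with $2j \geq 2$, hence has vanishing $\mathrm{H}^1$. A short induction on $m$, using the long exact sequences of the successive extensions in the filtration, then yields $\mathrm{H}^1(\widetilde{X}, K_m) = 0$. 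The only real content in the argument is this cohomological vanishing, which uses crucially both the triviality of $\widetilde{E}|_L$ and the $(-2)$-curve structure of $L$; everything else is formal manipulation of short exact sequences.
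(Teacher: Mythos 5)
Your proof is correct and is essentially the paper's argument in a mildly repackaged form: both identify the graded pieces $\widetilde{E}(-jL)/\widetilde{E}(-(j+1)L)\cong \widetilde{E}|_{L}\otimes\mathcal{O}_{L}(-jL)\cong\mathcal{O}_{\mathbb{P}^{1}}(2j)^{\oplus r}$ and rest on the vanishing of $\mathrm{H}^{1}$ of these twists on $\mathbb{P}^{1}$. The only difference is organizational: the paper builds $g_{m}$ inductively and composes surjections $\mathrm{H}^{0}(Q_{m})\twoheadrightarrow \mathrm{H}^{0}(Q_{m-1})\twoheadrightarrow \mathrm{H}^{0}(\widetilde{E}|_{L})$, whereas you define $g_{m}$ directly from $\mathcal{O}_{mL}\twoheadrightarrow\mathcal{O}_{L}$ and obtain surjectivity from $\mathrm{H}^{1}(\widetilde{E}(-L)/\widetilde{E}(-mL))=0$, which is the same computation.
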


\begin{proof}
  We induct on $m$. When $m=1$, we have the following restriction sequence:
  $$0 \longrightarrow \widetilde{E}(-L) \longrightarrow \widetilde{E}  \overset{f_{1}}{\longrightarrow} \widetilde{E}|_{L}  \longrightarrow 0. $$ 
  The lemma follows by taking $Q_{1}=\widetilde{E}|_{L}$ and $g_{1}=\mathrm{id}$. 

  Suppose the lemma is true for $m-1$. Then we have the following commutative diagram
  \[\begin{tikzcd}
      & & 0\arrow[d] & 0\arrow[d] & \\
      0 \arrow[r] & \widetilde{E}(-mL) \arrow[r]\arrow[d,"="] & E(-(m-1)L) \arrow[r]\arrow[d] & \mathcal{O}_{L}(2(m-1))^{\oplus r} \arrow[r] \arrow[d] & 0\\
      0 \arrow[r] & \widetilde{E}(-mL) \arrow[r] & E \arrow[r, "f_{m}"]\arrow[d, "f_{m-1}"] & Q_{m} \arrow[r] \arrow[d, "\beta_{m}"] & 0\\
      & & Q_{m-1} \arrow[r, "="]\arrow[d] & Q_{m-1}\arrow[d] & \\
            & &0 & 0 & .
    \end{tikzcd}\]
Define $g_{m}:=g_{m-1}\circ \beta_{m}: Q_{m} \rightarrow \widetilde{E}|_{L} $. By the induction hypothesis, we have
$$g_{m} \circ f_{m} = g_{m-1} \circ \beta_{m} \circ f_{m} =g_{m-1} \circ f_{m-1}= f_{1}.$$
  Since $m\geq 1$, we have $ \mathrm{H}^{1}(\widetilde{X}, \mathcal{O}_{L}(2(m-1)))=0$. Hence $ (\beta_{m})_{*}: \mathrm{H}^{0}(\widetilde{X}, Q_{m})\twoheadrightarrow \mathrm{H}^{0}(\widetilde{X}, Q_{m-1})$ is surjective. By the induction hypothesis, the map $ \mathrm{H}^{0}(\widetilde{X}, Q_{m-1}) \twoheadrightarrow  \mathrm{H}^{0}(\widetilde{X}, \widetilde{E}|_{L})$ is surjective, the lemma is proved.
  
\end{proof}

Now we prove Proposition \ref{descend}. Let $H$ be the pullback of the ample generator of $\mathrm{Pic}(X)$. 

\begin{proof}[Proof of Proposition \ref{descend}]
  First note that if $\widetilde{E}=\pi^{*}(E)$, then $\widetilde{E}|_{L}=\pi^{*}(E|_{p})\cong \mathcal{O}_{L}^{\oplus r}$.

  Conversely, assume $\widetilde{E}|_{L}=\mathcal{O}_{L}^{\oplus r}$. For $n\gg 0$, $A=nH-L$ is ample on $\widetilde{X}$, we have
  $ \mathrm{H}^{1}(\widetilde{X}, \widetilde{E}(mA))=0$ for $m\gg 0$.
  Twisting the exact sequence in Lemma \ref{surjection} by $mnH$, we have
  $$0 \longrightarrow \widetilde{E}(mA)  \longrightarrow \widetilde{E}(mnH)  \longrightarrow Q_{m}(mnH)  \longrightarrow 0. $$
  Since $Q_{m}$ is supported on $L$ and $\mathcal{O}_{\widetilde{X}}(H)|_{L}=\mathcal{O}_{L}$, we have $Q_{m}(mnH)= Q_{m}$. Let $F=\widetilde{E}(mnH)$. Taking long exact sequence of cohomology, we get a surjective map $ \mathrm{H}^{0}(\widetilde{X}, F) \twoheadrightarrow  \mathrm{H}^{0}(\widetilde{X}, Q_{m})$. By Lemma \ref{surjection}, we have a surjective map $ \mathrm{H}^{0}(\widetilde{X}, Q_{m}) \twoheadrightarrow  \mathrm{H}^{0}(\widetilde{X}, F|_{L})$. Hence the following restriction map is surjective
  $$ \mathrm{H}^{0}(\widetilde{X}, F) \twoheadrightarrow  \mathrm{H}^{0}(\widetilde{X}, F|_{L})= \mathrm{H}^{0}(L, \mathcal{O}_{L}^{\oplus r})=\mathbb{C}^{\oplus r}.$$

  Let $s_{1}, \cdots, s_{r}\in  \mathrm{H}^{0}(\widetilde{X}, F)$ be lifts of a basis of $ \mathrm{H}^{0}(L, \mathcal{O}_{L}^{\oplus r})=\mathbb{C}^{\oplus r}$. Let $Z$ be the dependency locus of $s_{1}, \cdots, s_{r}$. Since $s_{1}, \cdots, s_{r}$ are independent on $L$, $Z\cap L=\emptyset$. Hence $\pi|_{Z}$ is an isomorphism onto its image. Let $U=X-\pi(Z)$ and $\widetilde{U}=\pi^{-1}(U)$. Then $F|_{\widetilde{U}}\cong \mathcal{O}_{\widetilde{U}}^{\oplus r}$. Since $p$ is an ordinary double point, in particular a rational singularity, by the projection formula we have
  $$\pi_{*}(\widetilde{E})=\pi_{*}(F|_{\widetilde{U}}(-mnH))=\pi_{*}(F|_{\widetilde{U}})\otimes \mathcal{O}_{U}(-mnH)=\pi_{*}(\mathcal{O}_{\widetilde{U}}^{\oplus r})(-mnH)=\mathcal{O}_{U}(-mnH)^{\oplus r}.$$

  Hence $E=\pi_{*}(\widetilde{E})$ is locally free. By adjunction of functors, we have $\mathrm{Hom}_{\widetilde{X}}(\pi^{*}\pi_{*}(\widetilde{E}),\widetilde{E})\cong \mathrm{Hom}_{X}(\pi_{*}(\widetilde{E}),\pi_{*}(\widetilde{E}))$. Hence there is a map $f: \pi^{*}E \longrightarrow \widetilde{E}$ that corresponds to the identity map in $\mathrm{Hom}_{X}(\pi_{*}(\widetilde{E}),\pi_{*}(\widetilde{E}))$. Since $f$ is an isomorphism everywhere, we have $\pi^{*}(E)\cong \widetilde{E}$. 

\end{proof}

Next we show an equivalent condition of Proposition \ref{descend} for a simple rigid vector bundle whose first Chern class is some multiple of $H$. 

\begin{proposition}\label{equivalent}
Let $\widetilde{E}$ be a simple rigid vector bundle on $\widetilde{X}$ with $c_{1}(\widetilde{E})=dH$, $d\in \mathbb{Z}$. Then $\widetilde{E}=\pi^{*}(E)$ for a locally free sheaf $E$ on $X$, if and only if $\mathrm{Hom}_{\widetilde{X}}(\widetilde{E}, \widetilde{E}(L))=\mathbb{C}$. 
\end{proposition}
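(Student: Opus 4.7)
The plan is to use Proposition \ref{descend}, which reduces the claim to showing that $\mathrm{Hom}_{\widetilde{X}}(\widetilde{E}, \widetilde{E}(L)) = \mathbb{C}$ is equivalent to $\widetilde{E}|_{L} \cong \mathcal{O}_{L}^{\oplus r}$. I would extract information about the splitting type of $\widetilde{E}|_{L}$ from this Hom group via a restriction sequence and Grothendieck's splitting on $L \cong \mathbb{P}^{1}$.

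First, I apply $\mathrm{Hom}_{\widetilde{X}}(\widetilde{E}, -)$ to the restriction sequence
$$0 \longrightarrow \widetilde{E} \longrightarrow \widetilde{E}(L) \longrightarrow \widetilde{E}(L)|_{L} \longrightarrow 0.$$
The simplicity and rigidity of $\widetilde{E}$ give $\mathrm{Hom}(\widetilde{E}, \widetilde{E}) = \mathbb{C}$ and $\mathrm{Ext}^{1}(\widetilde{E}, \widetilde{E}) = 0$, so the long exact sequence collapses to
$$0 \longrightarrow \mathbb{C} \longrightarrow \mathrm{Hom}_{\widetilde{X}}(\widetilde{E}, \widetilde{E}(L)) \longrightarrow \mathrm{Hom}_{L}(\widetilde{E}|_{L}, \widetilde{E}|_{L}(-2)) \longrightarrow 0,$$
using $\mathcal{O}_{\widetilde{X}}(L)|_{L} \cong \mathcal{O}_{L}(-2)$ since $L$ is a $(-2)$-curve. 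Thus $\mathrm{Hom}_{\widetilde{X}}(\widetilde{E}, \widetilde{E}(L)) = \mathbb{C}$ if and only if $\mathrm{Hom}_{L}(\widetilde{E}|_{L}, \widetilde{E}|_{L}(-2)) = 0$.

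Next I use Grothendieck's splitting to write $\widetilde{E}|_{L} \cong \bigoplus_{i=1}^{r} \mathcal{O}_{L}(a_{i})$. The hypothesis $c_{1}(\widetilde{E}) = dH$ combined with $H \cdot L = 0$ (a consequence of $H$ being a pullback and the projection formula $\pi^{*}H_{0} \cdot L = H_{0} \cdot \pi_{*}L = 0$) forces $\sum_{i} a_{i} = 0$. On the other hand,
$$\mathrm{Hom}_{L}(\widetilde{E}|_{L}, \widetilde{E}|_{L}(-2)) = \bigoplus_{i,j} \mathrm{H}^{0}(L, \mathcal{O}_{L}(a_{j} - a_{i} - 2))$$
vanishes if and only if $a_{j} - a_{i} \leq 1$ for all $i, j$, i.e., the $a_{i}$'s take at most two consecutive values $\{k, k+1\}$. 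If both values occur, with multiplicities $m$ and $r - m$ where $1 \leq r - m \leq r - 1$, the degree condition gives $r - m = -rk$, which forces $k$ into the open interval $(-1, 0)$ and thus has no integer solution. Hence all $a_{i}$ are equal, and $\sum a_{i} = 0$ then forces every $a_{i} = 0$, giving $\widetilde{E}|_{L} \cong \mathcal{O}_{L}^{\oplus r}$. The converse is immediate, since $\mathrm{Hom}_{L}(\mathcal{O}_{L}^{\oplus r}, \mathcal{O}_{L}(-2)^{\oplus r}) = 0$.

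The main subtlety is the essential role of the hypothesis $c_{1}(\widetilde{E}) = dH$: without it, the Hom-vanishing only bounds the spread of the $a_{i}$, and a two-valued splitting type with nontrivial first Chern class on $L$ could perfectly well satisfy the Hom condition. The degree constraint $\sum a_{i} = 0$ is exactly what closes the numerical case analysis; everything else in the argument is standard bookkeeping with the long exact sequence and the splitting principle on $\mathbb{P}^{1}$.
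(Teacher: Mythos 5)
Your proof is correct and follows essentially the same route as the paper: the same long exact sequence from $0 \to \widetilde{E} \to \widetilde{E}(L) \to \widetilde{E}(L)|_{L} \to 0$ together with simplicity and rigidity reduces the claim to the vanishing of $\mathrm{Hom}_{L}(\widetilde{E}|_{L}, \widetilde{E}|_{L}(-2))$, and then the Grothendieck splitting plus the degree constraint $\sum a_{i}=0$ from $c_{1}(\widetilde{E})\cdot L=0$ pins down the trivial splitting type, exactly as in the paper (which phrases the last step contrapositively: nontrivial splitting with $\sum a_{i}=0$ yields some $a_{j}\leq a_{i}-2$ and hence a nonzero Hom).
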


\begin{proof}
In this proof, all cohomology computations are taken on $\widetilde{X}$ unless otherwise stated. Since $\widetilde{E}$ is rigid, we have $\mathrm{Ext}^{1}(\widetilde{E}, \widetilde{E})=0$. 
  Applying $\mathrm{Hom}(\widetilde{E},-)$ to the short exact sequence
  $$0 \longrightarrow \widetilde{E} \longrightarrow \widetilde{E}(L) \longrightarrow \widetilde{E}(L)|_{L} \longrightarrow 0 ,$$
  we have the long exact sequence
  $$0 \longrightarrow \mathrm{Hom}(\widetilde{E}, \widetilde{E}) \longrightarrow \mathrm{Hom}(\widetilde{E}, \widetilde{E}(L))  \longrightarrow \mathrm{Hom}(\widetilde{E}, \widetilde{E}(L)|_{L}) \longrightarrow \mathrm{Ext}^{1}(\widetilde{E}, \widetilde{E})=0. $$
  Since $\widetilde{E}$ is simple, $\mathrm{Hom}(\widetilde{E},\widetilde{E})=\mathbb{C}$. Hence $\mathrm{Hom}(\widetilde{E},\widetilde{E}(L))=\mathbb{C}$ if and only if
  \begin{equation}\label{1}
  \mathrm{Hom}(\widetilde{E},\widetilde{E}(L)|_{L})\cong \mathrm{Hom}_{L}(\widetilde{E}|_{L}, \widetilde{E}|_{L}(-2))=0. 
\end{equation}

  If $\widetilde{E}$ descends, then by Proposition \ref{descend}, $\widetilde{E}(L)|_{L}\cong \mathcal{O}_{L}(-2)^{\oplus r}$. Hence
  $$\mathrm{Hom}(\widetilde{E},\widetilde{E}(L)|_{L})=\mathrm{Hom}_{L}(\widetilde{E}|_{L}, \widetilde{E}|_{L}(-2))=\mathrm{Hom}_{L}(\mathcal{O}_{L}^{\oplus r}, \mathcal{O}_{L}(-2)^{\oplus r})=0. $$

  Conversely, assume $\widetilde{E}$ cannot descend to a locally free sheaf. Since $\widetilde{E}$ is locally free, we may write $\widetilde{E}|_{L}=\bigoplus_{k=1}^{r}\mathcal{O}_{L}(a_{k}), a_{1}, \cdots, a_{r}\in \mathbb{Z}$. By Proposition \ref{descend}, $a_{k}$ cannot all be zero. Since $c_{1}(E)=dH\in \mathrm{Pic}(X)$ and $H\cdot L=0$, we have $\sum_{k=1}^{r}a_{k}=0$. Hence there exist some $a_{i}>0$ and some $a_{j}<0$, in particular $a_{j}\leq a_{i}-2$. Then we have
  $$\mathrm{hom}_{L}(\widetilde{E}|_{L}, \widetilde{E}|_{L}(-2))=\mathrm{hom}_{L}(\bigoplus_{k=1}^{r} \mathcal{O}_{L}(a_{k}), \bigoplus_{k=1}^{r} \mathcal{O}_{L}(a_{k}-2))\geq \mathrm{hom}_{L}(\mathcal{O}_{L}(a_{j}), \mathcal{O}_{L}(a_{i}-2))>0. $$
  Hence (\ref{1}) is not satisfied, we have $\mathrm{Hom}(\widetilde{E}, \widetilde{E}(L))\neq \mathbb{C}$.
\end{proof}

\section{Main Results}\label{section4}

In this section we prove the following main result of this paper. 

\begin{theorem}[Existence]\label{existence}
  Let $X$ be a general nodal $K3$ surface, $H$ be the ample generator of $\mathrm{Pic}(X)$, and $\vv=(r, dH, a)\in  \mathrm{H}^{*}_{alg}(X)$ with $\vv^{2}=-2$ and $r>0$. If either
\begin{enumerate}
\item $\mathrm{Cl}(X)= \mathrm{Pic}(X)$, or
\item $r\neq 2$,
\end{enumerate}
  then for $0 < \varepsilon \ll 1$, the unique sheaf $\widetilde{E}\in M_{\widetilde{X}, H-\varepsilon L}(\vv)$ descends to a locally free sheaf $E\in M_{X,H}(r, dH, a)$, where $L$ is the exceptional divisor. 
\end{theorem}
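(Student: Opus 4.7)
My plan is to combine Proposition \ref{equivalent} with a Bridgeland stability computation on $\widetilde{X}$. First I would verify that for $0 < \varepsilon \ll 1$ the class $\omega := H - \varepsilon L$ is ample and a generic polarization on $\widetilde{X}$ in the sense of \cite{O'G97} (the generic locus is open and dense in $\mathrm{Amp}(\widetilde{X})_{\mathbb{R}}$), so Theorem \ref{uniquesphericalvb} produces a unique locally free $\widetilde{E} \in M_{\widetilde{X}, \omega}(\vv)$. Since $\widetilde{E}$ is simple, rigid and has $c_{1}(\widetilde{E}) = dH$, Proposition \ref{equivalent} reduces the descent statement to $\mathrm{Hom}_{\widetilde{X}}(\widetilde{E}, \widetilde{E}(L)) = \mathbb{C}$. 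Applying $\mathrm{Hom}(\widetilde{E}, -)$ to $0 \to \widetilde{E} \to \widetilde{E}(L) \to \widetilde{E}(L)|_{L} \to 0$ and using $\mathrm{Ext}^{1}(\widetilde{E},\widetilde{E})=0$, this in turn becomes the vanishing $\mathrm{Hom}(\widetilde{E}, \widetilde{E}(L)|_{L}) = 0$.

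To establish this vanishing I would work inside the upper half-plane $\mathbb{H}_{\omega}$ of Bridgeland stability conditions. By Theorem \ref{largevolumelimit}, $\widetilde{E}$ is $\sigma_{(0, t\omega)}$-stable for $t \gg 0$, and by Theorem \ref{sphericalobject} it is the unique stable object of Mukai vector $\vv$ in the large-volume chamber. Following the strategy of \cite{Liu22}, I would track $\widetilde{E}$ along the nested semicircular walls in $\mathbb{H}_{\omega}$ from \cite{Mac14}. Any nonzero morphism $\widetilde{E} \to \widetilde{E}(L)|_{L}$ must factor through some quotient $\mathcal{O}_{L}(a-2)$ of $\widetilde{E}(L)|_{L}$, which forces a wall of $\vv$ yielding a numerical Jordan--H\"older decomposition $\vv = \vv_{1} + \vv_{2}$ into two spherical Mukai vectors in $\mathrm{H}^{*}_{alg}(\widetilde{X}) = \pi^{*}\mathrm{H}^{*}_{alg}(X) \oplus \mathbb{Z}L$ in which at least one $\vv_{i}$ has nonzero $L$-component.

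The step I expect to be the main obstacle is the lattice arithmetic ruling out such decompositions. Writing $\vv_{i} = (r_{i}, d_{i}H + e_{i}L, a_{i})$ with $r_{1} + r_{2} = r$, $d_{1} + d_{2} = d$, $e_{1} + e_{2} = 0$, and $\vv_{i}^{2} = -2$, the spherical conditions together with the pairing constraint $(\vv_{1}, \vv_{2}) \geq 0$ needed for the wall-extension to exist should collapse into a Diophantine system in $r$, $d_{i}$, $e_{i}$, $H^{2}$ whose only solutions with some $e_{i} \neq 0$ force $r = 2$ and in addition require a rank-one summand $d_{i}H + e_{i}L$ descending to a Weil divisor on $X$ that is not Cartier, i.e., demand $\mathrm{Cl}(X) \neq \mathrm{Pic}(X)$. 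Under either hypothesis of Theorem \ref{existence} no such decomposition exists, hence $\mathrm{Hom}(\widetilde{E}, \widetilde{E}(L)|_{L}) = 0$. Finally, once $\widetilde{E} = \pi^{*}E$ is established, Proposition \ref{descend} yields $E$ locally free on $X$, and $H$-stability of $E$ follows by pullback: any destabilizing subsheaf $F \subset E$ pulls back to a subsheaf of $\widetilde{E}$ of equal $\omega$-slope (since $H \cdot L = 0$) and equal reduced Hilbert polynomial, contradicting the $\omega$-semistability of $\widetilde{E}$.
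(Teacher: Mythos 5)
Your reduction via Proposition \ref{equivalent} to the statement $\mathrm{Hom}_{\widetilde{X}}(\widetilde{E}, \widetilde{E}(L))=\mathbb{C}$, i.e.\ $\mathrm{Hom}(\widetilde{E}, \widetilde{E}(L)|_{L})=0$, matches the paper, and the lattice arithmetic you anticipate (half-integral classes $\frac{k_{1}dH+e_{1}L}{2}$ coming from the $\mathbb{Z}/2\mathbb{Z}$ local class group of the node, with the surviving bad case forcing $r=2$ and $\mathrm{Cl}(X)\neq \mathrm{Pic}(X)$) is indeed the shape of the computation. The genuine gap is the bridge you assert in one sentence: that a nonzero morphism $\widetilde{E}\to \widetilde{E}(L)|_{L}$ ``forces a wall of $\vv$'' with a numerical Jordan--H\"older decomposition $\vv=\vv_{1}+\vv_{2}$ into spherical classes with nonzero $L$-components. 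This is not justified, and it aims at the wrong Mukai vector: in the paper's argument the class $\uu=(r,dH,a)$ of $\widetilde{E}$ has \emph{no} actual wall in the relevant region (Lemma \ref{stabilityS}; $W_{-1}$ is not even an actual wall for $\uu$), so no contradiction can be extracted from walls of $\vv(\widetilde{E})$ alone. The restriction $\widetilde{E}|_{L}$ is detected instead through the class $\vv(\widetilde{E}(L))=(r,dH+rL,a-r)$: the wall $W_{-1}=W(\uu,\ttt_{-1})$ is an actual wall for this class, the $\sigma_{-}$-HN filtration of the object $F_{+}$ stable above $W_{-1}$ is $0\to\widetilde{E}\to F_{+}\to \mathcal{O}_{L}(-2)^{\oplus r}\to 0$ (Corollary 4.13 of \cite{Liu22}), giving $\mathrm{Hom}(\widetilde{E},F_{+})=\mathbb{C}$ --- note the Hom is computed to be $\mathbb{C}$, not shown to vanish --- and the crux is then to prove $F_{+}\cong\widetilde{E}(L)$ by showing there is no wall for $(r,dH+rL,a-r)$ on $\mathbf{b}$ above $W_{-1}$. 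Your sketch never sets up this identification, so the Diophantine analysis you foresee has no wall-crossing statement to feed into.

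A second omission: the hypotheses $\mathrm{Cl}(X)=\mathrm{Pic}(X)$ or $r\neq 2$ enter the paper in \emph{two} places, and one of them is the $\sigma_{0}$-stability of the torsion object $\mathcal{O}_{L}(-2)$ (Lemma \ref{stabilityT}, Step 3), which is needed before the wall $W_{-1}$ and its crossing behaviour can be described at all; your proposal does not analyze the torsion side of the wall. Finally, a minor point: in your last step the pullback of a destabilizing subsheaf $F\subset E$ does not have literally the same $H_{\varepsilon}$-slope, since the image of $\pi^{*}F$ in $\widetilde{E}$ may differ from $\pi^{*}c_{1}(F)$ by a multiple of $L$, shifting the slope by $O(\varepsilon)$; this is harmless because $\gcd(r,d)=1$ rules out the equal-slope case and strict inequalities survive for $\varepsilon$ small, but as written the claim of ``equal $\omega$-slope'' is not correct.
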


We scketch the idea of the proof. Under the conditions in Theorem \ref{existence}, we will find a wall (denoted by $W_{-1}$) in $\mathrm{Stab}(\widetilde{X})$ and adjacent chambers $\mathcal{C}_{+}, \mathcal{C}_{-}$ of $W_{-1}$ (see Figure \ref{setup}), such that for any $F_{+}\in M_{\widetilde{X}, \mathcal{C}_{+}}(\vv)$, the $\mathcal{C}_{-}$-Harder-Narasimhan filtration of $F_{+}$ is
$$0 \longrightarrow \widetilde{E} \longrightarrow F_{+} \longrightarrow \mathcal{O}_{L}(-2)^{\oplus r} \longrightarrow 0 .$$
Theorem \ref{existence} will follow from Proposition \ref{equivalent} if we prove $F_{+} \cong \widetilde{E}(L)$. 
To do this, we show that there are no walls between $\mathcal{C}_{+}$ and the $(H-\varepsilon L)$-Gieseker chamber of $v$ for $0<\varepsilon \ll 1$, by using the numerical conditions for walls developed in \cite{BM14a}. 

Recall that for a general nodal $K3$ surface $X$ (Definition \ref{nodalK3}) and its resolution $\widetilde{X}$, we have $\mathrm{Pic}(X)\oplus \mathbb{Z}L=\mathbb{Z}H\oplus \mathbb{Z}L \hookrightarrow \mathrm{Pic}(\widetilde{X})$ as a full rank sublattice. The pullback map $\pi^{*}$ induces an embedding $\pi^{*}: \mathrm{Pic}(X) \longrightarrow \mathrm{Pic}(\widetilde{X})$. We will sometimes write $H$ for $\pi^{*}(H)$ on $\widetilde{X}$ if the context is clear. For a fixed Mukai vector $v\in  \mathrm{H}^{*}_{alg}(\widetilde{X})$, we have the chamber decomposition of $\mathrm{Amp}(\widetilde{X})_{\mathbb{R}}$ that distinguishes different Gieseker stability conditions \cite{HL97}. 

Since $X$ is a general nodal $K3$ surface, we have $\mathrm{Pic}(\widetilde{X})_{\mathbb{R}}=\mathbb{R}H\oplus \mathbb{R}L$. Hence the walls are rays from the origin. Since $H\in \mathrm{Pic}(\widetilde{X})$ generates an extremal ray of $\mathrm{Nef}(\widetilde{X})_{\mathbb{R}}$, there is exactly one chamber of $\mathrm{Amp}(\widetilde{X})_{\mathbb{R}}$ whose boundary contains $H$. For $0< \varepsilon \ll 1$, $(H-\varepsilon L)$ belongs to this chamber.

Before proving the theorem we set some notations. 

\begin{notation}\label{notation1}
For $0<\varepsilon \ll 1$, let $H_{\varepsilon}=H-\varepsilon L$ and $\mathbb{H}_{\varepsilon}=\mathbb{H}_{H_{\varepsilon}}$ (see (\ref{He})). For $0<\varepsilon'\ll \varepsilon \ll 1$, set $s(\varepsilon')=\frac{dH\cdot H_{\varepsilon}}{r H_{\varepsilon}^{2}}-\varepsilon'$. Let
$$\mathbf{b}=\mathbf{b}_{\varepsilon'}=\{\sigma_{(s, t)}: s=s(\varepsilon'), t>0 \}\subset \mathbb{H}_{\varepsilon}, $$ and $\mathcal{A}=\mathcal{A}_{s(\varepsilon')H_{\varepsilon}}$. Let 
$$\uu=(r, dH, a),~ \vv=(r, dH+rL, a-r),~ \ttt_{m}=(0, L, m),~ \uu, \vv,\ttt\in  \mathrm{H}^{*}_{alg}(\widetilde{X}). $$
 Let $W_{m}=W(\uu, \ttt_{m})$ be the numerical wall defined by $\uu$ and $\ttt_{m}$, and $\sigma_{+}\in \mathbf{b}$ (resp. $ \sigma_{-}\in \mathbf{b}$) be a generic stability condition that is above (resp. below) but near $W_{-1}$, and $\sigma_{0}= W_{-1}\cap \mathbf{b}$. (See Figure \ref{setup}.)
\end{notation}

\begin{definition}\label{blackhole}
  For the spherical Mukai vector $\uu=(r, dH, a)$, we define the point
$$\sigma_{\uu}=\left(\frac{dH^{2}}{rH_{\varepsilon}^{2}}, \sqrt{\frac{2a}{rH_{\varepsilon}^{2}}-\left(\frac{dH^{2}}{rH_{\varepsilon}^{2}} \right)^{2}} \right)$$
inside the $(s,t)$-upper half plane which contains $\mathbb{H}_{\varepsilon}$. (See Figure \ref{setup}.)
\end{definition}
\begin{figure}[h]
	\centering{
		\resizebox{90mm}{!}{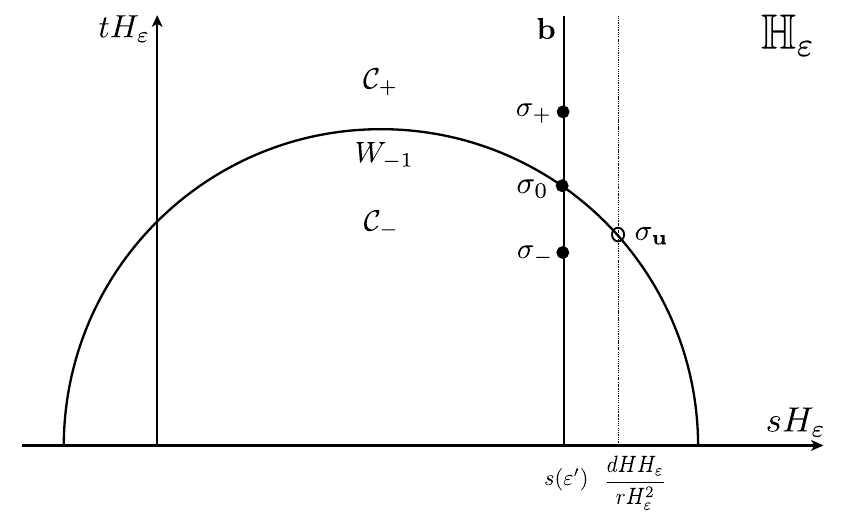}
		\caption{$\sigma_{+}\in \mathcal{C}_{+}, \sigma_{-}\in \mathcal{C}_{-}$.}
		\label{setup}
	}
\end{figure}

Note that $\sigma_{\uu}\notin \mathrm{Stab}(\widetilde{X})$: extending the central charge (see (\ref{Z})) to the $(s,t)$-upper half plane, we have $Z_{\sigma_{\uu}}(\uu)=0$. Hence the closure of every numerical wall of $\uu$ contains $\sigma_{\uu}$. 

The equation of $W_{m}$ is given by
$$\varepsilon r(s^{2}+t^{2})H_{\varepsilon}^{2}-(rmH_{\varepsilon}^{2})s=-mdH^{2}+2\varepsilon a.$$
For all $m\in \mathbb{Z}$, $W_{m}$ contains $\sigma_{\uu}$. The slope of $W_{m}$ at $\sigma_{\uu}$ is
$$
\der{W_{m}}{s}=\frac{rmH_{\varepsilon}^{2}-2\varepsilon dH^{2}}{2\varepsilon rt H_{\varepsilon}^{2}}. $$

The following two lemmas find the two $\sigma_{0}$-stable objects in $\mathcal{A}$. Recall that $\widetilde{E}$ is defined in Theorem \ref{existence}.

\begin{lemma}\label{stabilityS}
  The sheaf $\widetilde{E}\in \mathcal{A}$ is $\sigma_{0}$-stable. 
\end{lemma}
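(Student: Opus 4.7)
The approach is to combine the large-volume-limit input from Theorem~\ref{largevolumelimit} with a wall-crossing analysis on the ray $\mathbf{b}$, and then a final rigidity step ruling out strict semistability at $\sigma_0$.

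First, I would check that $\widetilde{E}\in \mathcal{A}$. By Theorem~\ref{uniquesphericalvb} $\widetilde{E}$ is $H_\varepsilon$-Gieseker stable, hence $\mu_{H_\varepsilon}$-semistable with normalized slope $s_{\uu} := dH^2/(rH_\varepsilon^2)$. For $\varepsilon'>0$ small the tilt threshold $s(\varepsilon') = s_{\uu} - \varepsilon'$ is strictly less than this slope, so every torsion-free quotient of $\widetilde{E}$ has normalized slope $>s(\varepsilon')$, while torsion quotients lie in $\mathcal{T}_{s(\varepsilon')H_\varepsilon}$ automatically. Hence $\widetilde{E}\in\mathcal{T}_{s(\varepsilon')H_\varepsilon}\subset\mathcal{A}$.

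Second, I would establish $\sigma_0$-semistability of $\widetilde{E}$. By Theorem~\ref{largevolumelimit}, the Gieseker stability of $\widetilde{E}$ implies $\sigma_{(0,tH_\varepsilon)}$-stability for all $t\gg 0$. Since every numerical wall for $\uu$ is a semicircle in $\mathbb{H}_\varepsilon$ passing through the ``black hole'' $\sigma_{\uu}$ (Definition~\ref{blackhole}), the walls intersect the vertical ray $\mathbf{b}$ in a discrete, totally ordered sequence, indexed by the slope $\tfrac{\mathrm{d}W_m}{\mathrm{d}s}|_{\sigma_{\uu}}$ computed in the excerpt. I would use this ordering to identify $W_{-1}$ as the topmost wall for $\uu$ on $\mathbf{b}$ carrying an actual destabilizing decomposition; walls $W_m$ above it are only numerical, with no realization in $\mathcal{A}$. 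Stability of $\widetilde{E}$ is therefore preserved along $\mathbf{b}$ from the large-volume regime down to (but not including) $\sigma_0$, and taking a limit gives $\sigma_0$-semistability.

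Third, I would upgrade semistability to strict $\sigma_0$-stability. A hypothetical Jordan--H\"older short exact sequence $0\to A\to\widetilde{E}\to B\to 0$ at $\sigma_0$ forces both $v(A)$ and $v(B)$ into the rank-two sublattice $\mathbb{Z}\uu+\mathbb{Z}\ttt_{-1}$, because $\uu^2=\ttt_{-1}^2=-2$ and $\langle\uu,\ttt_{-1}\rangle=r$. Combining $v(A)+v(B)=\uu$ with $v(A)^2,v(B)^2\geq -2$ and non-negative ranks leaves only two possibilities: either $v(A)=k\ttt_{-1}$ or $v(B)=k\ttt_{-1}$ for some $k>0$. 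The first case is impossible because $A$ would be a nonzero torsion subsheaf of the locally free $\widetilde{E}$. The second case, a surjection $\widetilde{E}\twoheadrightarrow B$ with $v(B)=k\ttt_{-1}$, factors through $\widetilde{E}|_L$ and yields a surjection onto a pure sheaf on $L$ with Mukai vector $(0,kL,-k)$; this I would exclude by an adjunction computation together with the Gieseker stability of $\widetilde{E}$ and the genericity of $H_\varepsilon$.

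The main obstacle is Step 2: certifying that $W_{-1}$ is the outermost actual wall on $\mathbf{b}$ and that no $W_m$ with $m\neq -1$ interferes. This reduces to an explicit inequality using the wall equation recorded above and the slope formula $\tfrac{\mathrm{d}W_m}{\mathrm{d}s}|_{\sigma_{\uu}}$, with care about the signs of $m$ and the hierarchy $\varepsilon'\ll\varepsilon\ll 1$. Step 3 is then comparatively short once the wall-lattice analysis is in place.
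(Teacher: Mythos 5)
Your scaffolding (membership in $\mathcal{A}$, large volume limit, then walking down the ray $\mathbf{b}$) matches the paper, but the core of your Step 2 has a genuine gap: you only plan to compare $W_{-1}$ with the other numerical walls $W_{m}=W(\uu,\ttt_{m})$, i.e.\ with walls defined by classes in the lattice $\ZZ\uu\oplus\ZZ\ttt_{-1}$. By \cite{BM14a}, an actual wall for $\uu$ on $\mathbf{b}$ above $W_{-1}$ is defined by a spherical class $\uu'$ with $\uu'$ and $\uu-\uu'$ $\mathcal{A}$-effective and $\uu\uu'<0$, and a priori $\uu'$ need not lie in $\ZZ\uu\oplus\ZZ\ttt_{-1}$: since the class group of the node is $\ZZ/2\ZZ$, $\mathrm{Pic}(\widetilde{X})$ can strictly contain $\ZZ H\oplus\ZZ L$, so one must allow $\uu'=\left(\tfrac{k_{1}}{2}r,\ \tfrac{k_{1}dH+e_{1}L}{2},\ m\right)$ with $k_{1},e_{1}\in\ZZ$ of the same parity. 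Ruling out every such $\uu'$ --- using that a wall above $W_{-1}$ becomes vertical as $\varepsilon\to 0$, forcing $\mu_{H}(\uu')=\mu_{H}(\uu)$, together with the $\mathcal{A}$-effectivity inequalities and the hierarchy $0<\varepsilon'\ll\varepsilon\ll 1$ --- is the bulk of the paper's proof, and your plan (``an explicit inequality using the wall equation for $W_{m}$'') does not touch these destabilizers. The half-integer classes are exactly the delicate point; they are the source of the hypotheses appearing in Lemma \ref{stabilityT} and Theorem \ref{nonexistence}.

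You also have the role of $W_{-1}$ backwards, which makes your Step 3 both necessary and unsupported. The paper's observation is that $W_{-1}$ is \emph{not} an actual wall for $\uu$: $\uu$ and $\ttt_{-1}$ are the minimal solutions of the Pell equation $\ww^{2}=-2$ in $\langle\uu,\ttt_{-1}\rangle$, so once no actual wall lies strictly above $W_{-1}$ on $\mathbf{b}$, one gets stability (not merely semistability) at $\sigma_{0}$ with no separate upgrade step. Your proposed upgrade does not close this gap: the claim that Jordan--H\"older factors at $\sigma_{0}$ must have Mukai vectors in $\ZZ\uu+\ZZ\ttt_{-1}$ is unjustified --- genericity of $\sigma_{0}$ only places them in the rank-two lattice of classes whose central charges align at $\sigma_{0}$, which may strictly contain $\ZZ\uu+\ZZ\ttt_{-1}$ (again because of half-integer classes), and the ensuing dichotomy ``$v(A)=k\ttt_{-1}$ or $v(B)=k\ttt_{-1}$'' together with the adjunction argument is left as a sketch precisely where the work would have to be done.
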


\begin{proof}
  First note that $\widetilde{E}\in \mathcal{A}$, since 
  $$ \mathrm{Im}[Z_{(s_{\varepsilon'}H_{\epsilon}, tH_{\varepsilon})}(\widetilde{E})]= t \varepsilon' r H_{\epsilon}^{2} >0, ~ \forall t\in \mathbb{R}_{+}.$$
	
  Since $H'$ and $H_{\varepsilon}$ belong to the same chamber, $\widetilde{E}$ is $H_{\varepsilon}$-stable. Since $\uu^{2}=-2$, $\mathrm{gcd}(r,d)=1$, $\widetilde{E}$ is $\mu_{H_{\varepsilon}}$-stable. By Theorem \ref{largevolumelimit}, $\widetilde{E}$ is $\sigma_{s_{u}, t}$-stable for $t\gg 0$. Since $\uu,\ttt_{-1}$ are the minimal solutions for the Pell equation
  $$\ww^{2}=-2, \ww\in \langle \uu, \ttt_{-1} \rangle \subset \mathrm{H}^{*}_{alg}(\widetilde{X}), $$
  $W_{-1}$ is not an actual wall of $\uu$. Hence it suffices to prove that there is no actual wall of $\uu$ on $\mathbf{b}$ that is above $W_{-1}$.

  Suppose $W$ is such an actual wall. By \cite{BM14a}, there exists a spherical Mukai vector $u'\neq 0$ such that $\uu'$ and $\uu-\uu'$ are $\mathcal{A}$-effective and $\uu\uu'<0$. Since $W$ is a numerical wall of $\uu$, $W$ contains $\sigma_{0}$. Since $\mathbf{b}$ is on the left of $\sigma_{0}$ and $W$ is above $W_{-1}$ on $\mathbf{b}$, we have
  \begin{equation}\label{derivative}
  \der{W}{t}<\der{W_{-1}}{t}=\frac{-rH_{\varepsilon}^{2}-2\varepsilon dH^{2}}{2\varepsilon rt H_{\varepsilon}^{2}}. 
  \end{equation}
  Since $\mathrm{lim}_{\varepsilon \rightarrow 0}\frac{dW_{-1}}{dt}=-\infty $, the wall $W$ becomes vertical as $\varepsilon \rightarrow 0$. Hence we must have $\mu_{H}(\uu')=\mu_{H}(\uu)$. Then $\uu'=(kr, kdH+eL, m)$ for some $k, e\in \mathbb{Q}$ and $m\in \mathbb{Z}$. Since $p$ is an ordinary double point, its class group is $\mathbb{Z}/2 \mathbb{Z}$. Hence the pushforward of $2(kdH+eL)$ is Cartier on $X$, we have $k, e\in \frac{1}{2} \mathbb{Z}$. Write $k_{1}=2k, e_{1}=2e$ for $k_{1}, e_{1} \in \mathbb{Z}$. Summarizing the conditions for $k_{1}, e_{1}$, we have
  \begin{equation}\label{numerical}
  \begin{array}{c}
  	\uu'=\left(\dfrac{k_{1}}{2}r, \dfrac{k_{1}dH+e_{1}L}{2}, m\right), (\uu')^{2}=-2, \\
  	\mathrm{Im}[Z_{(s_{\varepsilon'}H_{\epsilon}, t H_{\epsilon})}(\uu')]>0, ~ \mathrm{Im}[Z_{(s_{\varepsilon'}H_{\epsilon}, t H_{\epsilon})}(\uu-\uu')]>0, ~\forall t\in \mathbb{R}_{+},
  \end{array}
\end{equation}
  where $\mathrm{Im}(-)$ denotes the imaginary part of a complex number, and $Z(-)$ is defined in (\ref{Z}).
  We will show that under (\ref{derivative}) and (\ref{numerical}), $k=1, e=0$. Then $\uu'=\uu$ and $W=W_{-1}$.
  \\
  \textbf{Step 1: When $e=0$, (\ref{numerical}) implies $k=1$.}
  First observe that $k_{1}$ and $e_{1}$ must have the same parity. Since $\frac{k_{1}dH+e_{1}L}{2}\in \mathrm{Pic}(\widetilde{X})$, we have
  $$\left( \frac{k_{1}dH+e_{1}L}{2} \right)^{2}=\frac{k_{1}^{2}d^{2}H^{2}-2e_{1}^{2}}{4}\in \mathbb{Z}. $$
  If $e_{1}$ is odd, then $k_{1}$ is odd. Assume $e_{1}$ is even and $k_{1}$ is odd. Since $\frac{k_{1}r}{2}\in \mathbb{Z} $, $r$ is even. Since $\mathrm{gcd}(r,d)=1$, $d$ is odd. Since $\frac{k_{1}^{2}d^{2}H^{2}}{4}\in \mathbb{Z} $, we have $4|H^{2}$. On the other hand, since $\uu=(r, dH, a)$ is spherical, we have
  $$-2=d^{2}H^{2}-2ra \equiv 0 (\mathrm{mod}~ 4),$$
  a contradiction. 
  
  Since $k_{1}, e_{1}$ have the same parity, $k\in \mathbb{Z}$. Since $(\uu')^{2}=-2$, we have
  $$-2=k^{2}d^{2}H^{2}-2krm, $$
  hence $k|2$. The right hand side is not divisible by 4, hence $k$ is odd. 
  We have $k=\pm 1$ and $\uu'=\pm \uu$. Since $\uu'$ is $\mathcal{A}$-effective, we have $k=1$. Hence in the following we assume $e\neq 0$.
  \\
  \textbf{Step 2: When $e\neq 0$, (\ref{numerical}) implies $k\geq 0$.}
  Since $\uu-\uu'$ is $\mathcal{A}$-effective, we have
  \begin{equation}\label{2}
  	0\leq H_{\varepsilon}((1-k)dH-eL-(1-k)rs(\varepsilon')H_{\varepsilon})=(1-k)r\varepsilon'H_{\varepsilon}^{2}-2e\varepsilon. 
  	\end{equation}
  Note that $k$ and $e$ cannot both be negative: since $\uu'$ is $\mathcal{A}$-effective, we have
  $$0\leq H_{\varepsilon}((kdH+eL)-krs(\varepsilon')H_{\varepsilon})=2e\varepsilon + \varepsilon' krH_{\varepsilon}^{2}.$$
  If $k<0$, then $\frac{1-k}{e}\geq \frac{2\varepsilon}{r\varepsilon' H_{\varepsilon}^{2}}\gg 0 $. Hence $\frac{k^{2}}{e^{2}} \gg 0$. On the other hand, since $\uu\uu'<0$, we have
  $$0>\uu\uu'=kd^{2}H^{2}-kra-rm=-k+\frac{e^{2}-1}{k}. $$
  If $k<0$, then $k^{2}<e^{2}$, a contradiction. Hence $k\geq 0$.
  \\
  \textbf{Step 3: When $e\neq 0, k\geq 0$, (\ref{derivative}) and (\ref{numerical}) imply $e>0$.}
  Since $\uu'$ is $\mathcal{A}$-effective, we have
  $$0\leq H_{\varepsilon}((kdH+eL)-krs(\varepsilon')H_{\varepsilon})=2e\varepsilon + \varepsilon' krH_{\varepsilon}^{2}.$$
  If $e<0$, then $\frac{k}{-e}\geq \frac{2\varepsilon}{\varepsilon'rH_{\varepsilon}^{2}}\gg 0 $. On the other hand, $W=W(\uu, k\uu-\uu')$, and
  $$k\uu-\uu'=(0,-eL, ka-m )=\left(0, -eL, \frac{k}{r}+\frac{e^{2}-1}{k}\right)=-e\left(0, L, \frac{k}{-er}+\frac{e^{2}-1}{-ekr}\right).$$
  Let $l=\frac{k}{-er}+\frac{e^{2}-1}{-ekr}$. Then
  $$\der{W}{s}=\frac{rlH_{\varepsilon}^{2}-2\varepsilon dH^{2}}{2\varepsilon rtH_{\varepsilon}^{2}}\gg 0. $$
  Since $W$ is above $W_{-1}$ on $\mathbf{b}$, we have $\der{W}{s}<\der{W_{-1}}{s}<0 $, a contradiction. Hence $e> 0$. 
  \\
  \textbf{Step 4: When $k\geq 0, e>0$, (\ref{numerical}) is not satisfied.}
  By (\ref{2}), we have
  $$0\leq (1-k)r\varepsilon'H_{\varepsilon}^{2}-2e\varepsilon \leq r\varepsilon'H_{\varepsilon}^{2}-2e\varepsilon.$$
  However, since $e> 0$, $0<\varepsilon'\ll \varepsilon$, and $r$ is independent of the choice of $\varepsilon, \varepsilon'$, we have $r\varepsilon'H_{\varepsilon}^{2}-2e\varepsilon <0$. This is a contradiction. 
\end{proof}

\begin{lemma}\label{stabilityT}
If either $\mathrm{Pic}(X)= \mathrm{Cl}(X)$ or $r=\rk(\widetilde{E})\neq 2$, then $\mathcal{O}_{L}(-2)\in \mathcal{A}$ is $\sigma_{0}$-stable.
\end{lemma}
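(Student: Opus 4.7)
The plan is to mirror the four-step structure of the proof of Lemma \ref{stabilityS}, with $\mathcal{O}_{L}(-2)$ and $\ttt_{-1}$ playing the roles of $\widetilde{E}$ and $\uu$. First, $\mathcal{O}_{L}(-2)$ is a torsion sheaf, so it lies in $\mathcal{T}_{(\beta,\omega)}\subset \mathcal{A}$; and since it is a simple pure $1$-dimensional Gieseker-stable sheaf, Theorem \ref{largevolumelimit} gives $\sigma_{(s(\varepsilon')H_{\varepsilon},\,tH_{\varepsilon})}$-stability for $t\gg 0$. It therefore suffices to rule out actual walls of $\ttt_{-1}$ on $\mathbf{b}$ strictly above $W_{-1}$.

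Suppose such a wall $W$ exists. By \cite{BM14a}, there is a nonzero Mukai vector $\vv'=(r',D',a')$ with $\vv'$ and $\ttt_{-1}-\vv'$ both $\mathcal{A}$-effective on $W\cap \mathbf{b}$ and of the same phase as $\ttt_{-1}$. I would parameterize $D' = \alpha H + \beta L$ (allowing half-integer $\alpha,\beta$ in the case $\mathrm{Pic}(\widetilde{X})\supsetneq \mathbb{Z}H\oplus \mathbb{Z}L$) and apply the analog of Steps~1--4 of Lemma \ref{stabilityS}: the $\mathcal{A}$-effectivity constraints together with the vertical limit of $W$ as $\varepsilon\to 0$, which forces the $\mu_{H}$-slopes to match. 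This reduces $\vv'$ either to an integer multiple $\vv'=k\uu$ with $k\geq 1$, or to a half-integer refinement class such as $(\uu+\ttt_{-1})/2$, integral only when $(dH+L)/2\in \mathrm{Pic}(\widetilde{X})$ and specific parity conditions coming from $\uu^{2}=-2$ and $\gcd(r,d)=1$ are met. Under $\mathrm{Pic}(X)=\mathrm{Cl}(X)$ the refinement is immediately excluded by integrality; under $r\neq 2$, the parity analysis (mirroring Steps~1--4 of Lemma \ref{stabilityS}) shows that the simultaneous integrality of the rank, Picard coefficients, and the last entry of the Mukai vector forces $r=2$, and hence the refinement is excluded in this case as well.

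We are reduced to $\vv'=k\uu$ with $k\geq 1$. Any $\mathcal{A}$-effective sub $F\hookrightarrow \mathcal{O}_{L}(-2)$ with $\vv(F)=k\uu$ fits in a short exact sequence of sheaves $0\to A\to F\to \mathcal{O}_{L}(-2)\to 0$, with $A$ torsion-free in $\mathcal{F}$ and $\vv(A)=k\uu-\ttt_{-1}$. A direct computation yields $\vv(A)^{2}=-2(k^{2}+kr+1)$, hence $\vv(A)^{2}+2=-2k(k+r)<0$, so by the Mukai dimension formula no $\mu$-stable sheaf with this Mukai vector exists. Bogomolov's inequality $\vv(A)^{2}\geq -2(kr)^{2}$ immediately rules out $r=1$, and the divisibility constraint $\gcd(r,d)=1$, interpreted in $\mathrm{Pic}(\widetilde{X})$, rules out strictly $\mu$-semistable decompositions of $A$ under the hypothesis. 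Thus no such $A$ (and hence no $F$) exists, yielding the desired contradiction. The main obstacle will be executing the parity/integrality analysis in the middle paragraph, which is what identifies the refinement class $(1,(dH+L)/2,(a-1)/2)$ as the unique obstruction, appearing precisely in the bad case $r=2$ with $\mathrm{Pic}(X)\neq \mathrm{Cl}(X)$.
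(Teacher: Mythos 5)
Your overall strategy diverges from the paper's, and it has two genuine gaps. The paper does not run a large-volume/wall-crossing argument for $\ttt_{-1}$ at all: it works directly at $\sigma_{0}$, takes an arbitrary $\sigma_{0}$-Jordan--H\"older factor $G$ of $\mathcal{O}_{L}(-2)$, notes $G$ is simple rigid by Mukai's lemma (so $\vv(G)^{2}=-2$), uses genericity of $\sigma_{0}$ on $W_{-1}$ to force $\vv(G)$ into the lattice spanned by $\uu$ and $\ttt_{-1}$, and then classifies the possibilities using only $\mathcal{A}$-effectivity of $\vv(G)$ and $\ttt_{-1}-\vv(G)$. Your reduction ``it suffices to rule out actual walls of $\ttt_{-1}$ on $\mathbf{b}$ strictly above $W_{-1}$'' misses the crux: $\sigma_{0}$ lies \emph{on} the numerical wall $W_{-1}=W(\uu,\ttt_{-1})$, where $\widetilde{E}$ and $\mathcal{O}_{L}(-2)$ have equal phase, so $\mathcal{O}_{L}(-2)$ could a priori be strictly semistable exactly there even if it is stable everywhere above (this is precisely what happens to $F_{+}$ on the same wall). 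You would need the analogue of the paper's ``$W_{-1}$ is not an actual wall'' step for $\ttt_{-1}$, which is exactly the content the paper's Steps 1--4 supply. Also, Theorem \ref{largevolumelimit} as stated requires $\rk>0$ and so does not apply to the rank-zero sheaf $\mathcal{O}_{L}(-2)$; a separate argument for torsion sheaves would be needed.

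The second gap is in your elimination of $\vv'=k\uu$. From a subobject of class $k\uu$ in $\mathcal{A}$ you only get an exact sequence $0\to A\to F\to \mathcal{O}_{L}(-2)\to \mathcal{H}^{0}(Q)\to 0$, so $\vv(A)=k\uu-\ttt_{-1}$ need not hold; more seriously, ``$\vv(A)^{2}<-2$ hence no such sheaf exists'' is only valid for stable (or at least simple rigid) sheaves, not for an arbitrary torsion-free $A\in\mathcal{F}$, and the inequality you call Bogomolov's, $\vv(A)^{2}\geq -2(kr)^{2}$, is not a theorem for unstable sheaves. The paper avoids all of this numerically: in the case $e'=0$, sphericity alone gives $\vv(G)=\pm\uu$, effectivity gives $\vv(G)=\uu$, and the concluding rank bookkeeping among Jordan--H\"older factors (all factors would then have to be rank-zero multiples of $\ttt_{-1}$, forcing $-2=-2m^{2}$ with $m\geq 2$) finishes. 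Finally, a smaller but real inaccuracy: the obstruction class in the bad case is not $\frac{1}{2}(\uu+\ttt_{-1})=(1,\frac{dH+L}{2},\frac{a-1}{2})$, which is isotropic when $r=2$, hence not spherical and not admissible; the paper's Step 3 shows the half-integral classes are $\frac{1}{2}\ttt_{-1}+\frac{k_{1}}{2}\uu$ with $k_{1}(k_{1}-r)=3$, so $k_{1}\in\{-1,3\}$ and necessarily $r=2$, which is how the hypothesis ``$\Pic(X)=\Cl(X)$ or $r\neq 2$'' enters. Your parity/integrality instinct is in the right direction, but the argument as proposed does not close.
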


\begin{proof}
  Recall that $\mathbf{b}=\mathbf{b}_{\varepsilon'}=\{\sigma_{(s, t)}: s=s(\varepsilon'), t>0 \}$ and $\sigma_{0}=\sigma_{0}(\varepsilon, \varepsilon')= W_{-1}\cap \mathbf{b}$ (Notation \ref{notation1}).
  
   Let $G$ be any $\sigma_{0}$-Jordan-H\"{o}lder factor of $\mathcal{O}_{L}(-2)$, and let $\mathbf{g}=(r', d'H+e'L, a')$ be its Mukai vector, where $r', a'\in \mathbb{Z}$ and $d', e'\in \frac{1}{2}\mathbb{Z} $. Then $\mathbf{g}, \ttt_{-1}-\mathbf{g}$ are both $\mathcal{A}$-effective. By Mukai's Lemma (see e.g. \cite{Bri08}), $G$ is simple and rigid. Hence $\mathbf{g}^{2}=-2$. Summarizing the conditions on $r', d', e', a'$, we have
   \begin{equation}\label{numerical2}
   	\begin{array}{c}
   		\mathbf{g}=(r', d'H+e'L, a'), ~\mathbf{g}^{2}=-2,\\
   		\mathrm{Im}[Z_{(s_{\varepsilon'}H_{\epsilon}, t H_{\epsilon})}(\mathbf{g})]>0, ~ \mathrm{Im}[Z_{(s_{\varepsilon'}H_{\epsilon}, t H_{\epsilon})}(\ttt_{-1}-\mathbf{g})]>0, ~\forall t\in \mathbb{R}_{+}.
   	\end{array}
   \end{equation}
   We claim that under (\ref{numerical2}), either $r'=0$ or $\mathbf{g}=\mathbf{u}$. The claim implies the lemma in the following way. If $\mathcal{O}_{L}(-2)$ is not $\sigma_{0}$-stable, then it has at least two $\sigma_{0}$-Jordan-H\"{o}lder factors. Since $\mathbf{u}$ has positive rank and $\mathbf{t}_{-1}$ has rank 0, $\mathbf{t}_{-1}=m\mathbf{g}$ for some $m\geq 2$, where $\mathbf{g}$ has rank 0. However, since $\mathbf{g}$ is spherical, we have 
   $$ -2=\mathbf{t}_{-1}^{2}=(m\mathbf{g})^{2}=-2m^{2}. $$
   Hence $m=1$, a contradiction. Now we prove the claim in several steps.
   \\
   \textbf{Step 1: Show $d'r-r'd=0$.} Since $\sigma_{0}=W_{-1}\cap \mathbf{b}_{\varepsilon'}$, $\sigma_{0}$ is generic on the wall $W_{-1}$. Hence $\mathbf{u}, \mathbf{g}, \mathbf{t}_{-1}$ are linearly dependent. We may write $\mathbf{g}=e'\mathbf{t}_{-1} + k \mathbf{u}$ for some $k\in \mathbb{Q}$. Hence $d'=kd, r'=kr$, and we have $d'r-r'd=0$.
   \\
   \textbf{Step 2: Either $e'=\frac{1}{2}$, $e'=0$, or $r'=0$.} Since $d'r -r'd=0$, $\mathrm{Im}[Z_{(s_{\varepsilon'}H_{\epsilon}, t H_{\epsilon})}(\mathbf{g})]>0$ in (\ref{numerical2}) implies that 
   \begin{equation*}\label{3}
   	(d'H-\dfrac{r'}{r}dH^{2})+2\varepsilon e' +r'\varepsilon' H_{\varepsilon}^{2} = 2\varepsilon e' +r'\varepsilon' H_{\varepsilon}^{2} \geq 0. 
   \end{equation*}
   Since $0 \ll \varepsilon' \ll \varepsilon <1$, we have $e'\geq 0$. 
   Similarly, $\mathrm{Im}[Z_{(s_{\varepsilon'}H_{\epsilon}, t H_{\epsilon})}(\ttt_{-1}-\mathbf{g})]>0$ in (\ref{numerical2}) implies that
   \begin{equation*}\label{4}
   	(-d'H^{2}+\dfrac{r'}{r}dH^{2})+2\varepsilon (1-e')-r'\varepsilon'H_{\varepsilon}^{2} =
   	2\varepsilon (1-e')-r'\varepsilon'H_{\varepsilon}^{2} \geq 0.
   \end{equation*}
   Since $0< \varepsilon' \ll \varepsilon$, we have $e'\leq 1$. Since $e'\in \frac{1}{2}\mathbb{Z}$, the possibilities are $e'=0, \frac{1}{2}, 1$. When $e'=1$, we must have $r'=0$.
   \\
   \textbf{Step 3: Show $e'\neq \frac{1}{2}$.} When $\mathrm{Pic}(X)= \mathrm{Cl}(X)$, this is automatically true. Hence for the rest of Step 3, we assume $\mathrm{Pic}(X)\neq \mathrm{Cl}(X)$ and $r\neq 2$. Suppose $e'=\frac{1}{2}$. First note that $k\in \frac{1}{2}\mathbb{Z}$. Since $d', e'\in \frac{1}{2}\mathbb{Z}$ and 
   $$ 2\mathbf{g}=(2kr, 2kdH+L, -2+ka), $$
   we have $2kr\in \mathbb{Z}, 2kd\in \mathbb{Z}$. Since $\gcd(r,d)=1$, we have $k\in \frac{1}{2}\mathbb{Z}$.
   
   Then we may write $\mathbf{g}=\frac{1}{2}\mathbf{t}_{-1}+ \frac{k_{1}}{2} \mathbf{u}$ for some $k_{1}\in \mathbb{Z}$. 
   Since $\mathbf{t}_{-1}\cdot \mathbf{u}=r$, we have 
   $$ -2=\mathbf{g}^{2}=\left(\frac{1}{2}\mathbf{t}_{-1}+ \frac{k_{1}}{2} \mathbf{u}\right)^{2}=-\frac{1}{2} + \frac{k_{1}}{2}r - \frac{k_{1}^{2}}{2}. $$
   Hence $k_{1}(k_{1}-r)=3$, in particular $k_{1}|3$. The possibilities of $k_{1}$ are $k_{1}=-3, -1, 1, 3$. Since $k_{1}$ is odd and $r'=\frac{k_{1}}{2}r \in \mathbb{Z}$, $r$ must be even. 
   \\
   \textit{When $k_{1}<0$:} Since $r\geq 2$, we have 
      $3= k_{1}(k_{1}-r)\geq k_{1}^{2}+2 .$
      Hence $k_{1}=-1, r=2$, which is excluded by the assumption.
      \\
      \textit{When $k_{1}=1$:} Since $r\geq 2$, $3=k_{1}(k_{1}-r)<0$, a contradiction. 
      \\
      \textit{When $k_{1}=3$:} In this case we have $r=2$, which is excluded by the assumption.
  \\
  \textbf{Step 4: Either $\mathbf{g}=\mathbf{u}$ or $r'=0$.} By Step 2, when $e'=1$, we have $r'=0$. By Step 3, $e'\neq \frac{1}{2}$. Hence we need to show when $e'=0$, $\mathbf{g}=\mathbf{u}$. In this case, $\mathbf{g}=\frac{k_{1}}{2}\mathbf{u}$. Hence 
  $$-2=\mathbf{g}^{2}=\left(\frac{k_{1}}{2}\mathbf{u}\right)^{2}=-\frac{k_{1}^{2}}{2},$$
  we have $\mathbf{g}=\frac{k_{1}}{2}=\pm 1$, $\mathbf{g}=\pm \mathbf{u}$. 
  Note that $\mathcal{E}\in \mathcal{A}$, we see $-\mathbf{u}$ is not $\mathcal{A}$-effective. Since $\mathbf{g}$ is $\mathcal{A}$-effective, we have $\mathbf{g}=\mathbf{u}$.   
\end{proof}

Now we prove Theorem \ref{existence}. 

\begin{proof}[Proof of Theorem \ref{existence}]

  If $r=1$, then $E=\mathcal{O}_{\widetilde{X}}(dH)=\pi^{*}(\mathcal{O}_{X}(dH))$. Hence in the following we assume $r\geq 2$. 
  
  By Lemma \ref{stabilityS} and Lemma \ref{stabilityT}, $\widetilde{E}, \mathcal{O}_{L}(-2)\in \mathcal{A}$ are the two stable spherical objects at the numerical wall $W_{-1}=W(\widetilde{E}, \mathcal{O}_{L}(-2))$ whose Mukai vectors are in the rank 2 lattice spanned by $\uu$ and $\ttt_{-1}$.
  Recall that $\vv=(r, dH+rL, a-r)$. 
  By \cite{BM14a}, $W_{-1}$ is an actual wall for $\vv$. Let $\sigma_{+}$(resp. $\sigma_{-}$) be a stability condition right above(resp. below) $W_{-1}$. Let $F_{+}\in M_{\sigma_{+}}(\vv)$. By Corollary 4.13 \cite{Liu22}, the $\sigma_{-}$-Harder-Narasimhan filtration of $F_{+}$ is
  $$0 \longrightarrow \widetilde{E} \longrightarrow F_{+} \longrightarrow \mathcal{O}_{L}(-2)^{\oplus r} \longrightarrow 0 .$$
  Hence $\mathrm{Hom}(\widetilde{E}, F_{+})=\mathbb{C}$. By Proposition \ref{equivalent}, it suffices to prove $\widetilde{E}(L)=F_{+}$. Since $H'$ and $H_{\varepsilon}$ are in the same chamber, $\widetilde{E}(L)$ is $H_{\varepsilon}$-stable. By Theorem \ref{largevolumelimit}, it suffices to prove that there is no wall on $\mathbf{b}$ above $W_{-1}$.

  Suppose on $\mathbf{b}$ there is a wall $W$ above $W_{-1}$, by \cite{BM14a}, there exists an $\mathcal{A}$-effective spherical Mukai vector $\vv'\neq \vv$ that defines $W$, and $\vv\vv'<0$. When $\varepsilon$ approaches zero, the walls $W$ and $W'$ are both becoming vertical on $\mathbb{H}_{\varepsilon}$. Hence we have $\mu_{H}(\vv')=\mu_{H}(\vv)$. Then we may write $\vv'=(kr, kdH+eL, m)$, where $ m\in \mathbb{Z}$ and $k=\frac{k_{1}}{2} \in \frac{1}{2}\mathbb{Z} , e=\frac{e_{1}}{2} \in \frac{1}{2}\mathbb{Z}$. As in the proof of Lemma \ref{stabilityS}, $k_{1}, e_{1}$ have the same parity. Also note that $\vv'$ and $\vv-\vv'$ are both $\mathcal{A}$-effective. Summarizing, we have 
  \begin{equation}
  	\begin{array}{c}
  		\mathbf{v}'=\left(\dfrac{k_{1}}{2}r,~\dfrac{k_{1}}{2}dH+\dfrac{e_{1}}{2}L, m\right), ~(\vv ')^{2}=-2, \vv \vv'<0, \\
  		\mathrm{Im}[Z_{(s_{\varepsilon'}H_{\epsilon}, t H_{\epsilon})}(\mathbf{v})]>0, ~ \mathrm{Im}[Z_{(s_{\varepsilon'}H_{\epsilon}, t H_{\epsilon})}(\vv-\vv ')]>0, ~\forall t\in \mathbb{R}_{+}.
  	\end{array}
  \end{equation}
We claim that the wall $W=W(\vv, \vv')$ is not above $W_{-1}$. 

First we assume $e\neq 0$. Suppose $e=0$, then $k\in \mathbb{Z}$. Since $(\uu')^{2}=-2$ and $m\in \mathbb{Z}$, we have $k=\pm 1$ and $\uu'=\pm \uu$, $W=W_{-1}$. Now we prove the claim in the following steps. 
\\
  \textbf{Step 1: Show $k\neq 0$.} Suppose $k=0$, then $\vv'=(0, eL, m)$. Since $(\vv')^{2}=-2$ and $\vv'$ is $\mathcal{A}$-effective, $e=1$.
  Since
  $$0<\vv\vv'=(r,dH+rL, a-r)\cdot (0, L, m)=(-2-m)r,$$
  we have $m>-2$. If $m=-1$, then $W=W_{-1}$, hence $m\geq 0$. We first show that $$\phi_{(s(\varepsilon), t)}(\vv') > \phi_{(s(\varepsilon), t)}(\vv)> \phi_{(s(\varepsilon), t)}(\uu) $$
  when $t\gg 0$. 
  To prove the first inequality, note that $\mathrm{lim}_{t \rightarrow \infty} \phi_{(s(\varepsilon), t)}(\vv)=0$. However, for $m\geq 0$ we have
  $$\mathrm{lim}_{t \rightarrow \infty} \phi_{(s(\varepsilon), t)}(\vv')
  =\mathrm{lim}_{t \rightarrow \infty} \phi_{(s(\varepsilon), t)}(0,L,m)
  =
  \bigg\{\begin{array}{l}
  	1/2, ~m=0,\\
  	1, ~m>0.
  \end{array}
  $$
  Hence $\phi_{(s(\varepsilon), t)}(\vv') > \phi_{(s(\varepsilon), t)}(\vv)$.
  The second inequality holds, because $\widetilde{E}, \widetilde{E}(L)$ are both $\mu_{H_{\varepsilon}}$-stable and $\widetilde{E}\subset \widetilde{E}(L)$. By Theorem \ref{largevolumelimit}, we have $\phi_{(s(\varepsilon), t)}(\widetilde{E}(L))> \phi_{(s(\varepsilon), t)}(\widetilde{E})$ when $t\gg 0$.

  Let $(s(\varepsilon), t_{1}), (s(\varepsilon), t_{2}), (s(\varepsilon), t_{3})$ be the intersection points of $\mathbf{b}$ with $W(\vv, \vv'), W(\vv,\uu)=W_{-1}, W_{\uu, \vv'}=W_{m}$ respectively. We may assume $\uu, \vv, \vv'$ are independent Mukai vectors, otherwise $W(\vv, \vv')=W(\vv, \uu)=W_{-1}$. Hence $W(\vv', \vv)$ does not pass through $\sigma_{\uu}$ (Definition \ref{blackhole}). By assumption $W(\vv', \vv)$ is above $W_{-1}$, we have $t_{1}>t_{2}$ and $t_{1}>t_{3}$. Since $m\geq 0$, we have $t_{1}> t_{2}> t_{3}$. (See Figure \ref{walls}.)
  \begin{figure}[h]
  	\centering{
  		\resizebox{90mm}{!}{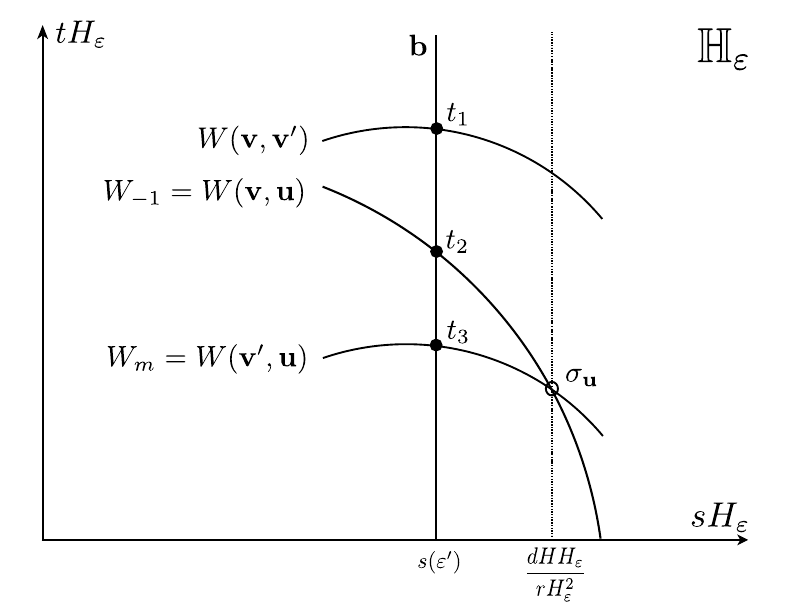}
  		\caption{$W(\vv, \vv')$, $W(\vv, \uu)$, and $W(\vv', \uu)$.}
  		\label{walls}
  	}
  \end{figure}  
\\
  When $t\gg 0$, we showed that
  $$\phi_{(s(\varepsilon), t)}(\vv') > \phi_{(s(\varepsilon), t)}(\vv)> \phi_{(s(\varepsilon), t)}(\uu).$$
  When $t_{2}<t<t_{1}$, we have
  $$\phi_{(s(\varepsilon), t)}(\vv)> \phi_{s(\varepsilon), t}(\vv')> \phi_{(s(\varepsilon), t)}(\uu),$$
  since $t_{3}<t_{2}$. When $t_{3}< t < t_{2}$, since $(s(\varepsilon), t)$ is below $W(\vv, \uu)$, we have $\phi_{(s(\varepsilon), t)}(\uu)> \phi_{(s(\varepsilon), t)}(\vv)$. Since $(s(\varepsilon), t)$ is above $W(\uu, \vv')$, we have $\phi_{(s(\varepsilon), t)}(\vv')> \phi_{(s(\varepsilon), t)}(\uu)$. However, since $(s(\varepsilon), t)$ is below $W(\vv, \vv')$, we have $\phi_{(s(\varepsilon), t)}(\vv)> \phi_{(s(\varepsilon), t)}(\vv')$. Hence when $t_{3}< t < t_{2}$, we have
  $$\phi_{(s(\varepsilon), t)}(\uu)> \phi_{(s(\varepsilon), t)}(\vv)>\phi_{(s(\varepsilon), t)}(\vv')> \phi_{(s(\varepsilon), t)}(\uu),$$
  a contradiction. In the following we assume $k\neq 0$. 
\\
  \textbf{Step 2: Show $k>0, e>0$.} We first show $ke>0$. Suppose not, we compute the $H_{\varepsilon}$-slopes of $\vv', \uu, \vv$. We have $\mu_{H_{\varepsilon}}(\vv')=\frac{dH^{2}}{r}+\frac{2e\varepsilon}{kr}$, $\mu_{H_{\varepsilon}}(\uu)=\frac{dH^{2}}{r} $, and $\mu_{H_{\varepsilon}}(\vv)=\frac{dH^{2}}{r}+2\varepsilon $. Let $s_{0}=\frac{dH\cdot H_{\varepsilon}}{rH_{\varepsilon}^{2}}$. Then for $t\gg 0$, we have
  $$\phi_{(s_{0}, t)}(\vv')<\phi_{(s_{0}, t)}(\uu) <\phi_{(s_{0}, t)}(\vv).$$
  Let $(s_{0}, t_{0})$ be the intersection point of $W(\vv', \vv)$ with the vertical line $\{s=s_{0}\}$. Since $W(\vv', \vv)$ is above $W_{-1}$ on $\mathbf{b}$, we have $t_{0}\geq \sqrt{\frac{2a}{rH_{\varepsilon}^{2}}-(\frac{dH^{2}}{rH_{\varepsilon}^{2}} )^{2}}$. Since $\uu, \vv, \vv'$ are independent, we have  $t_{0}> \sqrt{\frac{2a}{rH_{\varepsilon}^{2}}-(\frac{dH^{2}}{rH_{\varepsilon}^{2}} )^{2}}$. At $\sigma_{0}=\sigma_{(s_{0}, t_{0})}$, we have $\phi_{\sigma_{0}}(\vv)= \phi_{\sigma_{0}}(\vv')$. However, since every numerical wall of $\uu$ must pass through $\sigma_{\uu}$ (Definition \ref{blackhole}) and $\sigma_{0}$ is above $\sigma_{\uu}$, we have
  $$\phi_{\sigma_{0}}(\vv')< \phi_{\sigma_{0}}(\uu) < \phi_{\sigma_{0}}(\vv). $$
  This is a contradiction. Hence $ke>0$. 

  Since $\vv'$ is $\mathcal{A}$-effective, we have
  $$0\leq H_{\varepsilon}((kdH+eL)-krs(\varepsilon')H_{\varepsilon})=2e\varepsilon + \varepsilon'krH_{\varepsilon}^{2}.$$
  Hence $k, e$ cannot both be negative, we must have $k>0, e>0$.
\\
  \textbf{Step 3: The possible values of $k$ are $\frac{1}{2}, 1, \frac{3}{2}, 2$. }Since $\vv-\vv'$ is $\mathcal{A}$-effective and $k>0 $, we have
  $$0\leq H_{\varepsilon}((1-k)dH+(r-e)L - (1-k)rs(\varepsilon')H_{\varepsilon})=2\varepsilon (r-e) +(1-k)r\varepsilon' H_{\varepsilon}^{2}< 2\varepsilon (r-e) + r\varepsilon' H_{\varepsilon}^{2}. $$
  Since $0< \varepsilon' \ll \varepsilon \ll 1$ and $r$ is independent of the choice of $\varepsilon$ and $\varepsilon'$, we have $0< e\leq r$. Since $(\vv')^{2}=-2$, we have
  $$m=ka-\frac{k^{2}+e^{2}-1}{kr}.$$
  Since $\vv\vv'<0$ and $r\geq 2$, we have
\begin{equation}\label{long}
  0>\vv \vv'=-k-2er+\frac{e^{2}-1}{k}+kr^{2} > kr^{2}-k-2r^{2}-\frac{1}{k} \geq 4(k-2)-k-\frac{1}{k}.
\end{equation}
If $k\geq \frac{5}{2} $, then $0>\vv \vv'>-\frac{9}{10} $. Since $\vv \vv'\in \mathbb{Z}$, $\vv \vv'=0$, a contradiction.
  Hence the only possibilities for $k$ are $k=\frac{1}{2}, 1, \frac{3}{2}, 2$. 
\\
\textbf{Step 4: Consider all possibilities of $k$.}
\\
  \textit{When $k=1$:} Since $\vv\vv'<0$, we have
$$  0>\vv\vv'= -k-2er+\frac{e^{2}-1}{k}+kr^{2} =-1-2er+e^{2}-1+r^{2}=(e-r)^{2}-2. $$
Hence $|e-r|\leq 1$. If $|e-r|=1$, then $\mathrm{gcd}(e,r)=1$. Note that
  $$m=ka-\frac{k^{2}+e^{2}-1}{kr}=a-\frac{e^{2}}{r}\in \mathbb{Z},$$
  we must have $r=1$. This was excluded by assumption. Hence $e=r$ and $\vv'=\vv$, we have $W=W_{-1}$.
\\
   \textit{When $k=2$:} Since all inequalities hold in (\ref{long}), we must have $2er=2r^{2}$, hence $e=r$. Since
  $$m=ka-\frac{k^{2}+e^{2}-1}{kr}=2a-\frac{r^{2}+3}{2r}\in \mathbb{Z}, $$
  we have $r|3$. By assumption $r>1$, hence $r=e=3$. However, we have
  $$\vv\vv'=-k-2er+\frac{e^{2}-1}{k}+kr^{2}= -2-2\cdot 9 +\frac{9-1}{2}+2\cdot 9=2>0. $$
  This contradicts our assumption that $\vv\vv'<0$.
\\
   \textit{When $k=\frac{3}{2}$:} We have
  $$0>\vv\vv'=-\frac{13}{6}+\left(\sqrt{\frac{3}{2}}r-\sqrt{\frac{2}{3}}e\right)^{2}\geq -\frac{13}{6}.  $$
  Since $\vv\vv'\in \mathbb{Z}$, we have $\vv\vv'=-1$ or $-2$. In the former case, we have $3r-2e=\pm \sqrt{7}$, which is impossible. In the latter case, we have $3r-2e=\pm 1$. Since $0<e\leq r$, we must have $3e\leq 3r=2e+1$, $e\leq 1$. Since $k_{1}=3$ is odd, $e=\frac{1}{2}$. Then $3r=2e+1=2$, which is impossible. 
\\
  \textit{When $k=\frac{1}{2}$:} We have
  $$0>\vv\vv'=-\frac{5}{2}+(\sqrt{2}e-\frac{r}{\sqrt{2}})^{2}\geq -\frac{5}{2}. $$
  Since $\vv\vv'\in \mathbb{Z}$, we have $(\sqrt{2}e-\frac{r}{\sqrt{2}})^{2}$ is $-\frac{1}{2}$ or $-\frac{3}{2}$. In the latter case, we have $2e=r\pm \sqrt{3}$, which is impossible. In the only case left, we have $e_{1}=r \pm 1$. Then
  $$m=ka-\frac{k^{2}+e^{2}-1}{kr}=\frac{a}{2}-\frac{e_{1}^{2}-3}{2r}=\frac{a}{2}-\frac{1}{2}\cdot \frac{e_{1}^{2}-3}{r}. $$
  Since $a$ is odd, we have
  $$\frac{e_{1}^{2}-3}{r}=\frac{(r\pm 1)^{2}-3}{r}=\frac{r^{2}\pm 2r -2}{r}=(r\pm 2) -\frac{2}{r}  $$
  is an odd integer. Hence $r=2$ and $e_{1}=1$ or $3$. 

  Hence if either $\mathrm{Pic}(X)=\mathrm{Cl}(X)$ or $r\neq 2$, then there is no wall on $\mathbf{b}$ which is above $W$, $\widetilde{E}=F_{+}$. By Proposition \ref{equivalent}, $\widetilde{E}$ descends to a vector bundle $E$ on $X$. The $H$-stability of $E$ follows from $\widetilde{H}$-stability of $\widetilde{E}$.
\end{proof}

\begin{remark}\label{mod8}
Note that in the last case of the proof of Theorem \ref{existence}, since $\frac{dH+e_{1}L}{2}$ is a divisor on $\widetilde{X}$, we have that
  $$\left( \frac{dH+e_{1}L}{2} \right)^{2}=\frac{d^{2}H^{2}-2e_{1}^{2}}{4}=d^{2}\frac{H^{2}}{4}-\frac{e_{1}^{2}}{2}   $$
  is an even integer. Hence we must have $H^{2}\equiv 2 (\mathrm{mod}~8)$. 
\end{remark}

\begin{corollary}\label{existenceanduniqueness}
Let $\vv=(r, dH, a)\in  \mathrm{H}^{*}_{alg}(X)$ be a spherical Mukai vector with $r>0$ and either condition in Theorem \ref{existence} is satisfied. Then $M_{X, H}(\vv)$ is a reduced point. Furthermore, the unique sheaf in $M_{X, H}(\vv)$ is locally free.
\end{corollary}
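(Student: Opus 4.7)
The plan is to combine Theorem \ref{existence}, which produces a distinguished locally free sheaf $E := \pi_{*}\widetilde{E} \in M_{X,H}(\vv)$ where $\widetilde{E}$ is the unique $(H-\varepsilon L)$-stable spherical sheaf on $\widetilde{X}$ with Mukai vector $\vv$, together with a pullback argument forcing every element of $M_{X,H}(\vv)$ to coincide with $E$ via Theorem \ref{uniquesphericalvb} on the resolution. Non-emptiness and the existence of a locally free representative are immediate from Theorem \ref{existence}; what remains is uniqueness.

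For arbitrary $E' \in M_{X,H}(\vv)$, I would pass to $\widetilde{X}$ and consider $\pi^{*}E'$. A Chern character computation, using $H \cdot L = 0$ on $\widetilde{X}$ and the inclusion $\pi^{*}\colon \mathrm{H}^{*}_{alg}(X) \hookrightarrow \mathrm{H}^{*}_{alg}(\widetilde{X})$, gives $\vv(\pi^{*}E') = \vv$. The main step is to verify that $\pi^{*}E'$ is $(H-\varepsilon L)$-Gieseker semistable for $0 < \varepsilon \ll 1$. I would run the argument sketched in the companion (commented-out) uniqueness statement: any $(H-\varepsilon L)$-destabilizing subsheaf $F \subset \pi^{*}E'$ with $\vv(F) = (r', d'H + e'L, a')$ either satisfies $d'/r' > d/r$, in which case adjunction supplies a map $\pi^{*}\pi_{*}F \to F$ that is an isomorphism away from $L$, so $\pi_{*}F \hookrightarrow E'$ destabilizes $E'$ in $H$-slope and contradicts $E' \in M_{X,H}(\vv)$; or $d'/r' = d/r$, in which case $\gcd(r, d) = 1$ forces $r' = r$ and $d' = d$, while effectivity of $c_{1}$ on the quotient forces $e' = 0$, so $F = \pi^{*}E'$.

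I expect the equal-slope case to be the delicate point, because one must rule out a rank-preserving subsheaf with a non-trivial component along $L$; here one may need first to replace $E'$ by its reflexive hull, or equivalently to argue local freeness of $E'$ at $p$ along the way, in order to ensure that $\pi^{*}E'$ is genuinely torsion-free before slopes are compared.

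Once semistability of $\pi^{*}E'$ is established, Theorem \ref{uniquesphericalvb} forces $\pi^{*}E' \cong \widetilde{E}$. In particular $\pi^{*}E'$ is locally free, so $E'$ is locally free at $p$, and applying $\pi_{*}$ together with $\pi_{*}\mathcal{O}_{\widetilde{X}} = \mathcal{O}_{X}$ (valid since $p$ is a rational singularity) and the projection formula yields $E' \cong \pi_{*}\widetilde{E} = E$, proving both uniqueness and local freeness of the unique sheaf.
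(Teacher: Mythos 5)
Your route---pull an arbitrary $E'\in M_{X,H}(\vv)$ back to $\widetilde{X}$, prove $(H-\varepsilon L)$-semistability of $\pi^{*}E'$, identify $\pi^{*}E'$ with $\widetilde{E}$ via Theorem \ref{uniquesphericalvb}, and push forward---is genuinely different from the paper's proof, and it has a real gap exactly at the point you flag. Every step of the plan (the computation $\vv(\pi^{*}E')=\vv$, torsion-freeness of $\pi^{*}E'$, the slope comparisons, and the final identification $E'\cong \pi_{*}\pi^{*}E'$) presupposes that $E'$ is locally free at the node $p$; but local freeness of the members of $M_{X,H}(\vv)$ is part of what the corollary asserts, so you cannot assume it. If $E'$ is only torsion-free at $p$, then $\pi^{*}E'$ in general acquires torsion along $L$ (already $\pi^{*}\mathfrak{m}_{p}$ does), its Mukai vector differs from $\vv$ by classes supported on $L$, and the unit $E'\to \pi_{*}\pi^{*}E'$ need not be an isomorphism; none of the subsequent comparisons then applies as stated. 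Your proposed fix does not close this: at an ordinary double point reflexive is strictly weaker than locally free (the nontrivial class of $\mathrm{Cl}(X)$ is realized by a reflexive, non-invertible rank-one sheaf), and replacing $E'$ by its reflexive hull changes $\mathrm{ch}_{2}$, so the hypothesis $\vv^{2}=-2$ and membership in $M_{X,H}(\vv)$ are lost. So an additional idea is needed to rule out, or directly handle, stable sheaves with Mukai vector $\vv$ that are not locally free at $p$. (The equal-slope destabilizer analysis you worry about is in fact the more harmless part, and for locally free $E'$ the semistability-of-pullback step is essentially the paper's Lemma \ref{necessary}, which the paper uses for Theorem \ref{nonexistence}, not for this corollary.)

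The paper's actual proof sidesteps the resolution entirely and thereby avoids the local-freeness issue: since $\vv^{2}=-2$ one has $\chi(E_{1},E_{2})=2$ for any $E_{1},E_{2}\in M_{X,H}(\vv)$, and $d^{2}H^{2}/2-ra=-1$ gives $\gcd(r,d)=1$, so semistable members are stable; Serre duality on the Gorenstein surface $X$ with $\omega_{X}\cong \mathcal{O}_{X}$ gives $\mathrm{Ext}^{2}(E_{1},E_{2})\cong \mathrm{Hom}(E_{2},E_{1})^{*}$, so one of $\mathrm{Hom}(E_{1},E_{2})$, $\mathrm{Hom}(E_{2},E_{1})$ is nonzero, and a nonzero map between stable sheaves with the same reduced Hilbert polynomial is an isomorphism. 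Hence every member of $M_{X,H}(\vv)$ coincides with the locally free sheaf produced by Theorem \ref{existence}, which yields uniqueness, local freeness, and (since $\hom=\mathrm{ext}^{2}=1$ and $\chi=2$ force $\mathrm{ext}^{1}=0$) reducedness in one stroke. To salvage your approach you would need a separate argument that an $H$-stable sheaf with spherical Mukai vector is locally free at $p$---which is precisely what the paper's $\chi$-argument delivers for free.
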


\begin{proof}
  By Theorem \ref{existence}, $M_{X, H}(\vv)$ is non-empty. Since $v^{2}=-2$, $d^{2}H^{2}/2-ra=-1$, hence $\mathrm{gcd}(r,d)=1$. If $E_{1}, E_{2}\in M_{X, H}(\vv)$, then $E_{1}, E_{2}$ are $H$-stable. We have $\chi(E_{1}, E_{2})=-\vv^{2}=2$, either $\mathrm{Hom}(E_{1}, E_{2})$ or $\mathrm{Hom}(E_{2},E_{1})\cong \mathrm{Ext}^{2}(E_{1},E_{2})^{*}$ is nonzero. By stability, any nonzero map between $E_{1}$ and $E_{2}$ is an isomorphism. Hence the unique sheaf in $M_{X, H}(\vv)$ is constructed in Theorem \ref{existence}. It is locally free.

\end{proof}

Next we observe that all stable spherical sheaves on a nodal $K3$ surface are constructed by Theorem \ref{existence}.

\begin{lemma}\label{necessary}
  Using the notation in Theorem \ref{existence}, let $E$ be an $H$-stable spherical sheaf on $X$. Then $\pi^{*}(E)$ is an $H_{\varepsilon}$-stable spherical bundle for $0<\varepsilon \ll 1$. 
\end{lemma}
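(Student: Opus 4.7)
The plan is to check sphericity and stability separately. Sphericity is immediate: since $p$ is a rational singularity, $\pi^{*} \colon \HH^{*}_{alg}(X) \hookrightarrow \HH^{*}_{alg}(\widetilde{X})$ is an isometric embedding for the Mukai pairing, so $\vv(\pi^{*}E)^{2} = \vv(E)^{2} = -2$. Local freeness of $\pi^{*}E$ reduces to local freeness of $E$ at $p$, which I would extract from reflexivity of stable torsion-free sheaves together with the structure of reflexive modules at an ordinary double point.

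For stability, I would argue by contradiction. Suppose $F \subsetneq \pi^{*}E$ Gieseker-destabilizes for $H_{\varepsilon}$, with $\rk(F) = r'$ and $c_{1}(F) = d'H + e'L$, where $d', e' \in \tfrac{1}{2}\ZZ$ (potentially half-integral because $\Pic(\widetilde{X})$ may properly contain $\ZZ H \oplus \ZZ L$ when $\Cl(X) \neq \Pic(X)$). A direct computation yields
$$\mu_{H_{\varepsilon}}(F) - \mu_{H_{\varepsilon}}(\pi^{*}E) \;=\; \left(\frac{d'}{r'} - \frac{d}{r}\right)H^{2} + \frac{2e'\varepsilon}{r'},$$
so for $0 < \varepsilon \ll 1$ the destabilization forces either $d'/r' > d/r$, or $d'/r' = d/r$ with $e' \geq 0$.

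In the strict case I would use adjunction: the counit $\pi^{*}\pi_{*}F \to F$ is an isomorphism off $L$, so its mapping cone is supported on $L$ and $c_{1}(\pi_{*}F) = d'H$ as a Weil divisor class on $X$. By the projection formula together with rationality of $p$, $\pi_{*}F$ embeds as a proper subsheaf of $\pi_{*}\pi^{*}E = E$, and its $\mu_{H}$-slope $d'H^{2}/r'$ strictly exceeds $\mu_{H}(E) = dH^{2}/r$, contradicting $H$-stability of $E$.

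In the equality case, $\gcd(r,d) = 1$ (which follows from $\vv(E)^{2} = -2$) together with $r' \leq r$ pin down $(r', d') = (r, d)$, even when $d'$ is a priori a half-integer. Then $\pi^{*}E/F$ has rank $0$ and $c_{1} = -e'L$; effectiveness of $c_{1}$ of a torsion sheaf forces $e' \leq 0$, so $e' = 0$ and $\pi^{*}E/F$ is zero-dimensional. The constant-term Gieseker comparison then forces $\chi(F) = \chi(\pi^{*}E)$, whence $F = \pi^{*}E$, contradicting $F \subsetneq \pi^{*}E$. The main subtlety to handle carefully, already flagged above, is tracking the half-integral possibility for $d',e'$ when $\Cl(X) \neq \Pic(X)$; the case analysis uses only rational slope comparisons and effectiveness of multiples of the unique exceptional curve $L$, both of which are insensitive to integrality, so no genuine obstacle arises.
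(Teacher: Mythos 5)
Your strict-slope case reproduces the paper's actual argument (push $F$ forward; since the cone of $\pi^{*}\pi_{*}F\to F$ is supported on $L$ and $H\cdot L=0$, the subsheaf $\pi_{*}F\subset E$ has $H$-slope $d'H^{2}/r'>dH^{2}/r$, contradicting stability of $E$), and your treatment of the integral equality case via effectivity of $c_{1}$ of the rank-zero quotient is a sound refinement. The gap is in the step where you claim that $d'/r'=d/r$, $\gcd(r,d)=1$ and $r'\leq r$ pin down $(r',d')=(r,d)$ ``even when $d'$ is a priori a half-integer.'' With $d'\in\tfrac{1}{2}\ZZ$ you only get $r\mid 2r'$, so besides $(r',d')=(r,d)$ there is the possibility $r'=r/2$, $d'=d/2$ (with $d$ odd), i.e.\ a subsheaf $F$ of rank $r/2$ with $c_{1}(F)=\tfrac{1}{2}(dH+e_{1}L)$, $e_{1}$ odd. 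Such classes exist in $\Pic(\widetilde{X})$ precisely when $\Cl(X)\neq\Pic(X)$, which is exactly the situation in which the lemma is later applied (Theorem \ref{nonexistence} has $r=2$). For such an $F$ the destabilizing condition forces $e_{1}\geq 1$, so $F$ strictly $\mu_{H_{\varepsilon}}$-destabilizes $\pi^{*}E$, while its pushforward $\pi_{*}F\subset E$ has the \emph{same} $H$-slope as $E$ (again because $H\cdot L=0$); slope stability of $E$ therefore gives no contradiction, and ruling this case out needs a genuine Gieseker-level comparison of Euler characteristics through $\pi_{*}$ (or some other input), which your proposal does not supply. So the half-integrality you flagged is not harmless: it defeats the gcd step at the one point where the hypothesis $\Cl(X)\neq\Pic(X)$ actually bites.

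A second, smaller problem is your route to local freeness of $\pi^{*}E$. Reflexivity does not follow from stability (stable sheaves on a surface are only torsion-free), and, more to the point, reflexive modules over an ordinary double point are not all free: the $A_{1}$ singularity has a nontrivial indecomposable maximal Cohen--Macaulay module, whose existence is exactly what $\Cl(X)\neq\Pic(X)$ reflects. So ``reflexive at $p$ implies locally free at $p$'' fails, and with it your justification both of local freeness of $\pi^{*}E$ and (implicitly) of the identification $\vv(\pi^{*}E)=\pi^{*}\vv(E)$, which uses $E$ locally free at $p$. The intended mechanism is the opposite order: once $\pi^{*}E$ is shown $H_{\varepsilon}$-stable with spherical Mukai vector of positive rank, local freeness comes for free from Theorem \ref{uniquesphericalvb}, rather than being an input.
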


\begin{proof}
  Suppose $F\subset \pi^{*}(E)$ is an $H_{\varepsilon}$-destablizing subsheaf of $\pi^{*}(E)$ for sufficiently small $\varepsilon$. Then $\mu_{H}(F)>\mu(\pi^{*}(E))$. Since the mapping cone of $\pi^{*}\pi_{*}(F) \rightarrow F$ is supported on $L$ and $HL=0$, we have
  $$\mu_{X, H}(\pi_{*}(F))=\mu_{\widetilde{X}, H}(F)>\mu_{\widetilde{X}, H}(\pi^{*}(E))=\mu_{X, H}(E). $$
  Hence $\pi_{*}(F)\subset E$ is an $H$-destablizing subsheaf of $E$. 
\end{proof}

Next we show that the conditions in Theorem \ref{existence} are optimal.

\begin{theorem}[Nonexistence]\label{nonexistence}
Let $X$ be a general nodal $K3$ surface with $\mathrm{Pic}(X)=\mathbb{Z}H$. If $\mathrm{Cl}(X)\neq \mathrm{Pic}(X)$, then there exists no rank two $H$-stable spherical sheaf on $X$.
\end{theorem}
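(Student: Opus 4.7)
The plan is to reduce the question to the resolution $\widetilde{X}$ via Lemma \ref{necessary} and then derive a contradiction by computing $\mathrm{Hom}(\mathcal{L},\widetilde{E})$ in two incompatible ways, where $\mathcal{L}$ is a line bundle whose existence is forced by $\mathrm{Cl}(X)\neq \mathrm{Pic}(X)$.

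Suppose for contradiction that $E$ is an $H$-stable rank $2$ spherical sheaf on $X$ with Mukai vector $\vv(E)=(2, dH, a)$. The identity $\vv(E)^{2}=-2$ gives $d^{2}H^{2}-4a=-2$, so $d$ is odd and $H^{2}\equiv 2\pmod{4}$. By Lemma \ref{necessary}, $\widetilde{E}:=\pi^{*}(E)$ is a rank $2$ $\mu_{H_{\varepsilon}}$-stable spherical vector bundle on $\widetilde{X}$ for $0<\varepsilon\ll 1$, and Proposition \ref{descend} forces $\widetilde{E}|_{L}\cong \mathcal{O}_{L}^{\oplus 2}$.

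I would now use $\mathrm{Cl}(X)\neq \mathrm{Pic}(X)$ to exhibit a convenient half-class. The class group of an ordinary double point is $\mathbb{Z}/2$, so $\mathrm{Pic}(\widetilde{X})/(\mathbb{Z}H+\mathbb{Z}L)$ has order $2$. A parity analysis of a generator $(pH+qL)/2$, using integrality of intersection pairings and Wu's formula $D^{2}\in 2\mathbb{Z}$ for every $D\in \mathrm{Pic}(\widetilde{X})$, combined with the congruence $H^{2}\equiv 2\pmod{8}$ (cf.\ Remark \ref{mod8}), lets me take the generator to be $(H+L)/2$. Since $d$ is odd, $(dH+L)/2=((d-1)/2)H+(H+L)/2\in \mathrm{Pic}(\widetilde{X})$. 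Set $\mathcal{L}:=\mathcal{O}_{\widetilde{X}}((dH+L)/2)$ and $\mathcal{L}':=\mathcal{L}(-L)$. A direct computation yields $\mu_{H_{\varepsilon}}(\mathcal{L})=dH^{2}/2+\varepsilon$, $\mu_{H_{\varepsilon}}(\widetilde{E})=dH^{2}/2$, $\mu_{H_{\varepsilon}}(\mathcal{L}')=dH^{2}/2-\varepsilon$, and $\mathcal{L}|_{L}\cong \mathcal{O}_{L}(-1)$. I expect this arithmetic step, correctly aligning the local class group at the node with the global Wu/sphericity constraints, to be the main bookkeeping obstacle.

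The contradiction comes from computing $\mathrm{Hom}(\mathcal{L},\widetilde{E})$ two ways. Since $\mu_{H_{\varepsilon}}(\mathcal{L})>\mu_{H_{\varepsilon}}(\widetilde{E})$, any nonzero map $\mathcal{L}\to \widetilde{E}$ is injective and its saturation would destabilize $\widetilde{E}$, so $\mathrm{Hom}(\mathcal{L},\widetilde{E})=0$. On the other hand, applying $\mathrm{Hom}(-,\widetilde{E})$ to the restriction sequence $0\to \mathcal{L}'\to \mathcal{L}\to \mathcal{O}_{L}(-1)\to 0$ and using the duality formula $\mathrm{Ext}^{i}_{\widetilde{X}}(i_{*}\mathcal{F},\widetilde{E})\cong \mathrm{Ext}^{i-1}_{L}(\mathcal{F},\widetilde{E}|_{L}(-2))$ for $i\colon L\hookrightarrow \widetilde{X}$ together with $\widetilde{E}|_{L}=\mathcal{O}_{L}^{\oplus 2}$, both $\mathrm{Hom}(\mathcal{O}_{L}(-1),\widetilde{E})$ and $\mathrm{Ext}^{1}(\mathcal{O}_{L}(-1),\widetilde{E})$ vanish, so $\mathrm{Hom}(\mathcal{L},\widetilde{E})\cong \mathrm{Hom}(\mathcal{L}',\widetilde{E})$. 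A Mukai-pairing computation gives $\chi(\mathcal{L}',\widetilde{E})=2$; since $\mu_{H_{\varepsilon}}(\widetilde{E})>\mu_{H_{\varepsilon}}(\mathcal{L}')$, $\mu$-stability gives $\mathrm{Hom}(\widetilde{E},\mathcal{L}')=0$ and Serre duality on the $K3$ surface $\widetilde{X}$ then gives $\mathrm{Ext}^{2}(\mathcal{L}',\widetilde{E})=0$. Hence $\dim \mathrm{Hom}(\mathcal{L}',\widetilde{E})\geq 2$, contradicting $\mathrm{Hom}(\mathcal{L},\widetilde{E})=0$. An alternative route would extract the same contradiction from the $k=1/2$ case of the wall-crossing analysis in the proof of Theorem \ref{existence}, but the direct $\mathrm{Hom}$ calculation above seems more transparent.
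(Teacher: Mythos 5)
The argument breaks at the sentence ``Proposition \ref{descend} forces $\widetilde{E}|_{L}\cong \mathcal{O}_{L}^{\oplus 2}$.'' Proposition \ref{descend} gives triviality of the restriction only when $\widetilde{E}=\pi^{*}(E)$ with $E$ \emph{locally free}, but the theorem is about arbitrary $H$-stable sheaves on $X$, and on a nodal surface with $\mathrm{Cl}(X)\neq\mathrm{Pic}(X)$ a stable (even reflexive, rigid) sheaf need not be locally free at the node --- that possibility is exactly what makes the statement nontrivial. Since $c_{1}(\pi^{*}E)\cdot L=0$, all one knows a priori is $\pi^{*}(E)|_{L}\cong \mathcal{O}_{L}(b)\oplus\mathcal{O}_{L}(-b)$ for some $b\geq 0$; in fact the paper's proof shows that in this situation necessarily $b=2$ (it identifies $\pi^{*}(E)\otimes\mathcal{O}_{\widetilde X}(-D)$, $D=\frac{dH-L}{2}$, with the dual of the explicit bundle $N$ defined by $0\to N\to\mathcal{O}_{\widetilde X}^{\oplus 2}\to\mathcal{O}_{L}(1)\to 0$, using uniqueness of the stable spherical bundle of Mukai vector $(2,L,0)$), so your claimed triviality is not just unproved but is the opposite of what happens. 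Once $b>0$, your key identification $\mathrm{Hom}(\mathcal{L},\widetilde{E})\cong\mathrm{Hom}(\mathcal{L}',\widetilde{E})$ fails, because $\mathrm{Ext}^{1}_{\widetilde X}(\mathcal{O}_{L}(-1),\widetilde{E})\cong\mathrm{Hom}_{L}(\mathcal{O}_{L}(-1),\mathcal{O}_{L}(b-2)\oplus\mathcal{O}_{L}(-b-2))\neq 0$, and no contradiction results.

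The computations you do carry out are correct: the existence of $(H+L)/2\in\mathrm{Pic}(\widetilde X)$, the slopes, $\mathcal{L}|_{L}\cong\mathcal{O}_{L}(-1)$, $\chi(\mathcal{L}',\widetilde{E})=2$, and the stability/Serre-duality vanishings are all fine, so your argument does prove the weaker statement that no rank-two $H$-stable spherical \emph{locally free} sheaf exists on $X$. To get the full theorem you must treat non-locally-free $E$: the paper does this by first pinning down $\pi^{*}(E)|_{L}\cong\mathcal{O}_{L}(2)\oplus\mathcal{O}_{L}(-2)$ as above, then setting $F=\ker(\pi^{*}(E)\to\mathcal{O}_{L}(-2))$, noting $\pi_{*}F\cong E$ so adjunction produces a nonzero map $\pi^{*}(E)\to F$, while simplicity of $\pi^{*}(E)$ together with the nonvanishing of the restriction map forces $\mathrm{Hom}(\pi^{*}(E),F)=0$ --- the actual contradiction. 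Some substitute for this last step (or an independent proof that any stable spherical sheaf on $X$ is locally free, which is not available a priori) is what your proposal is missing.
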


\begin{proof}
  All cohomology computations are taken on $\widetilde{X}$ unless otherwise stated.
  Let $D\in \mathrm{Pic}(\widetilde{X})$ be a divisor that is not generated by $H$ and $L$. Since the class group of an ordinary double point is $\mathbb{Z}/ 2 \mathbb{Z}$, we have $2D'=aH+bL$ for some $a,b\in \mathbb{Z}$. Since
  $$a^{2}H^{2}-2b^{2}=(2D')^{2}\equiv 0 (\mathrm{mod}~8) $$
 and $H^{2}$ is even, $a,b$ must have the same parity. They are both odd. (Note that we must have $H^{2}\equiv 2 (\mathrm{mod}~8)$.)

 Let $\uu=(2, dH, a)\in  \mathrm{H}^{*}_{alg}(X)$ be a spherical Mukai vector. Since $d$ is odd and $D'\in \mathrm{Pic}(\widetilde{X})$, we have a divisor $D=\frac{dH-L}{2}\in \mathrm{Pic}(\widetilde{X})$.

 Suppose there exists $E\in M_{\widetilde{X}, H_{\varepsilon}}(\uu)$, by Lemma \ref{necessary}, $\pi^{*}(E)\in M_{\widetilde{X}, H_{\varepsilon}}(u)$ for $0< \varepsilon \ll 1$. Let $M=\pi^{*}(E)\otimes \mathcal{O}_{\widetilde{X}}(-D)\in M_{\widetilde{X}, H_{\varepsilon}}((2, L, 0))$. Next we explicitly construct $M$.

 Consider the sheaf $N$ that fits into the following exact sequence:
 \begin{equation}\label{defN}
   0 \longrightarrow N \longrightarrow \mathcal{O}_{\widetilde{X}}^{\oplus 2} \overset{ev}{\longrightarrow} \mathcal{O}_{L}(1) \longrightarrow 0.
   \end{equation}
   We show that $N$ is $H_{\varepsilon}$-stable spherical vector bundle. Suppose $N_{1}$ is the first $H_{\varepsilon}$-JordanH\"{o}lder factor of $N$. If $N$ is not stable, then $N_{1}$ is a rank 1 sheaf. By Mukai's Lemma, $N_{1}$ is spherical, hence a line bundle. Write $N_{1}=\mathcal{O}_{\widetilde{X}}(a'H+b'L)$ for some $a', b' \in \frac{1}{2}\mathbb{Z} $. Since $N_{1}\subset \mathcal{O}_{\widetilde{X}}^{\oplus 2}$ and $\mu_{H_{\varepsilon}}(N_{1})\geq \mu_{H_{\varepsilon}}(N)=-\varepsilon$, we have $a'=0$ and $b'\in \mathbb{Z}$. Since $c_{1}(N)=L\in \mathrm{Pic}(\widetilde{X})$ is primitive, we have  $$2b'\varepsilon=\mu_{H_{\varepsilon}}(\mathcal{O}_{\widetilde{X}}(b' L))> \mu_{H_{\varepsilon}}(N)=-\varepsilon.$$ 
   Hence $b'\geq 0$. Since $\mathcal{O}_{\widetilde{X}}(b'L)$ is a subsheaf of $\mathcal{O}_{\widetilde{X}}^{\oplus 2}$, $b'=0$ and $N_{1}=\mathcal{O}_{\widetilde{X}}$. Since $N$ is the kernel of an evaluation map, we have $ \mathrm{H}^{0}(\widetilde{X}, N)=0$, a contradiction. Hence $N$ is $H_{\varepsilon}$-stable. Since $N$ is simple rigid, it is locally free. Taking the dual of (\ref{defN}), we get
   $$0 \longrightarrow \mathcal{O}_{\widetilde{X}}^{\oplus 2} \longrightarrow N^{*} \longrightarrow \underline{Ext}_{\widetilde{X}}^{1}(\mathcal{O}_{L}(1), \mathcal{O}_{\widetilde{X}})\cong \mathcal{O}_{L}(-3) \longrightarrow 0.$$
   Since $N\in M_{\widetilde{X}, H_{\varepsilon}}((2, -L, 0))$, we have $N^{*}\in M_{\widetilde{X}, H_{\varepsilon}}((2, L, 0))$. Hence $M\cong N^{*}$, we have an exact sequence
\begin{equation}\label{defM}
  0 \longrightarrow \mathcal{O}_{\widetilde{X}}^{\oplus 2} \longrightarrow M \longrightarrow \mathcal{O}_{L}(-3) \longrightarrow 0.
  \end{equation}
  Restricting the sequence (\ref{defM}) to $L$, we have
  $$0 \longrightarrow \underline{Tor}^{\widetilde{X}}_{1}(\mathcal{O}_{L}(-3), \mathcal{O}_{L})\cong \mathcal{O}_{L}(-1) \overset{coev}{\longrightarrow} \mathcal{O}_{L}^{\oplus 2} \longrightarrow M|_{L} \longrightarrow \mathcal{O}_{L}(-3) \longrightarrow 0. $$
  Hence $M|_{L}$ fits into the following exact sequence
  $$0 \longrightarrow \mathcal{O}_{L}(1) \longrightarrow M|_{L} \longrightarrow \mathcal{O}_{L}(-3) \longrightarrow 0. $$
  Since $\mathrm{Ext}^{1}_{L}(\mathcal{O}_{L}(-3), \mathcal{O}_{L}(1))=0$, we have $M|_{L}\cong \mathcal{O}_{L}(1)\oplus \mathcal{O}_{L}(-3)$. Twisting by $D$, we see that $\pi^{*}(E)|_{L}\cong \mathcal{O}_{L}(2)\oplus \mathcal{O}_{L}(-2)$.

  Consider the sheaf $F$ on $\widetilde{X}$ that fits into the following exact sequence
  $$0 \longrightarrow F \longrightarrow \pi^{*}(E) \longrightarrow \mathcal{O}_{L}(-2) \longrightarrow 0.$$
  Since $\pi_{*}(\mathcal{O}_{L}(-2))=0$, we have $\pi_{*}(F)\cong \pi_{*}\pi^{*}(E)\cong E$. Hence we have a non-zero map $f: \pi^{*}(E) \rightarrow F$ induced by the map
  $\mathrm{id}: E \rightarrow \pi_{*}(F)$ on $X$. Note that $\pi^{*}(E)$ is stable spherical sheaf, hence $\mathrm{Hom}(\pi^{*}(E), \pi^{*}(E))=\mathbb{C}$. In the following exact sequence
  $$0 \longrightarrow \mathrm{Hom}(\pi^{*}(E),F) \longrightarrow \mathrm{Hom}(\pi^{*}(E),\pi^{*}(E)) \overset{res}{\longrightarrow} \mathrm{Hom}(\pi^{*}(E), \mathcal{O}_{L}(-2)), $$
the map $res$ is nonzero. Hence $\mathrm{Hom}(\pi^{*}(E), F)=0$, a contradiction. 
 
\end{proof}

\begin{example}[Nodal $K3$ surfaces with $\mathrm{Pic}(X)\cong \ZZ$ and $\Cl(X)\neq \Pic(X)$]\label{contraction}
	This example shows that the nodal $K3$ surfaces in Theorem \ref{nonexistence} exist. Let $\XX \subset \PP^{3}$ be a very general quartic surface that contains a line $L$. Then $\Pic(X)=\ZZ h \oplus \ZZ L$, where $h$ is the hyperplane class of $\PP^{3}$. We have 
	$$ h^{2}=4,~ h\cdot L= 1,~ L^{2}=-2. $$
	
	First note that the cone of nef divisors $\mathrm{Nef}^{1}(\XX)_{\RR}=\langle 2h+L, h-L \rangle$. The class $h-L$ is effective, since it is the residual curve of $L$ in a plane containing $L$. Note that $(h-L)^{2}=0$, hence $h-L$ is base point free, otherwise $(h-L)^{2}$ would be positive. Hence $h-L$ generates an extremal ray of $\mathrm{Nef}(\XX)_{\RR}$. To see $2h+L$ is nef, note that it has no base point away from $L$, hence it suffices to check it has no base point on $L$. Let $x\in L$ be a point. Since $h-L$ is base point free, we may choose a plane $P$ which contains $L$, such that the residual curve of $L$ in $P$ misses $x$. Denote this residual curve by $C$. Since $C$ is a cubic curve in $P$, $C=P\cap S$ for a cubic surface $S$. The residual curve of $C$ in $S$ is a member of $|3h-(h-L)|=|2h+L|$ that misses $x$. Hence $2h+L$ is base point free, in particular it is nef. Since $L$ is effective and $(2h+L)\cdot L=0$, $2h+L$ generates an extremal ray of $\mathrm{Nef}^{1}(\XX)_{\RR}$. 
	
	Let $\pi: \XX \rightarrow X$ be the morphism defined by $H:= 2h+L$. Then $\pi$ contracts the $(-2)$-curve $L$, its image $X$ has a node. We have 
	$$ \HH^{1}(X, \OO_{X})=\HH^{1}(\XX, \OO_{\XX})=0, \omega_{X}=\pi_{*}(\pi^{*}\omega_{X})=\pi_{*}\OO_{\XX}=\OO_{X}.$$
	Hence $X$ is a nodal $K3$ surface. We have $\Cl(X)=\Pic(\XX)/(L)=\ZZ h$, and 
	$$ \Pic(X)=L^{\perp }= \ZZ (2h+L)=\ZZ H \subset \Pic(\XX). $$
	Hence $\Cl(X)\neq \Pic(X)$, and we have $H=2h$ on $X$. On $\XX$, we have $h=\frac{H-L}{2}$.

\end{example}

\begin{example}[Very general singular quartics]\label{P3}
  This example is an application of Corollary \ref{existenceanduniqueness}. We say a quartic surface $X\subset \mathbb{P}^{3}$ is a \emph{very general singular quartic surface}, if $[X]\in \mathbb{P} \mathrm{H}^{0}(\mathbb{P}^{3}, \mathcal{O}_{\mathbb{P}^{3}}(4H))$ is a very general point in the discriminant hypersurface. We claim that a very general quartic surface $X$ is a general nodal $K3$ surface in the sense of Definition \ref{nodalK3} (see e.g. \cite{EH16}).

  Note that $H^{2}=4\equiv 4 (\mathrm{mod}~8)$, by Remark \ref{mod8} and Corollary \ref{existenceanduniqueness}, there is exactly one stable spherical vector bundle on such $X$ for every spherical Mukai vector. Hence for a general smooth quartic surface $X'$ and a spherical Mukai vector $\vv\in \mathrm{H}^{*}_{alg}(X')$, acquiring a nodal singularity is not an obstruction for $\vv$ to lift to an exceptional bundle on $\mathbb{P}^{3}$. 

To see the claim, let $\mathcal{Q}$ be the period domain in the moduli space of weight 2 Hodge structures on $\mathbb{Z}^{22}$. Then $\mathcal{Q}$ is 20-dimensional \cite{Huy16}. The closure of the locus
  $$S=\{V\in \mathcal{Q}: V^{1,1}\cap \mathbb{Z}^{22}\cong \mathbb{Z}H \oplus \mathbb{Z}L, H^{2}=4, L^{2}=-2, HL=0\}\subset \mathcal{Q}$$
  has codimension 2, hence $\mathrm{dim}\overline{S}=18$. By the Torelli Theorem, we get an 18-dimensional family $\mathcal{S}$ of $K3$ surfaces, whose very general member $\widetilde{Y}$ has
  $$\mathrm{Pic}(\widetilde{Y})=\mathbb{Z}H \oplus \mathbb{Z}L, H^{2}=4, L^{2}=-2, HL=0.$$
  On $\widetilde{Y}\in \mathcal{S}$, let $\pi$ be the morphism defined by $H$. Since $ \mathrm{H}^{0}(\widetilde{Y}, \mathcal{O}_{\widetilde{X}}(H))=\mathbb{C}^{4}$, the image of $\pi$ is a surface in $\mathbb{P}^{3}$ with degree $d$. Let $X$ be a general singular quartic surface whose resolution of singularity is $\widetilde{X}$, then $\widetilde{X}$ is a member of $\mathcal{S}$. Hence $d=4$. In this way we get an 18-dimensional family of non-isomorphic singular quartic surfaces in $\mathbb{P}^{3}$. Acting by $\mathbb{P}\mathrm{GL}_{4}$, we get a family of singular quartic surfaces. The dimension of this family is $18+15=33$. Since the discriminant hypersurface in $\mathbb{P} \mathrm{H}^{0}(\mathbb{P}^{3}, \mathcal{O}_{\mathbb{P}^{3}}(4H))$ has dimension 33, a very general singular quartic $X \in \Gamma$ is a very general member of $\mathcal{S}$, hence is a general nodal $K3$ surface. Note that in this case $\mathrm{Cl}(X)= \mathrm{Pic}(X)$. 
  
\end{example}

\section{References}

\bibliographystyle{alpha}
\renewcommand{\section}[2]{} 
\bibliography{sphvbsing}

\end{document}